\documentclass[reqno]{amsart}
\title[Weighted partitions and a master identity]{Weighted partitions with interval restrictions: exact formulas and a bivariate master identity}
\usepackage{amssymb,amsmath,amsthm,epsfig,graphics,latexsym}
\usepackage[hidelinks]{hyperref}
\theoremstyle{definition}

 \usepackage{tikz}
\usetikzlibrary{arrows.meta,positioning,shapes.geometric}
\theoremstyle{plain}
\newtheorem{theorem}    {Theorem}

\newtheorem*{maintheorem} {Main Theorem}
\newtheorem{lemma}      {Lemma}
\newtheorem{corollary}  {Corollary}
\newtheorem{conjecture} {Conjecture}
\theoremstyle{remark}

\newtheorem{remark}{{\bf Remark}}
\numberwithin{equation}{section}
 \newcommand{\Mod}[1]{\ (\mathrm{mod}\ #1)}

\newcommand{\fr}{\frac}

\newmuskip\pFqskip
\mathchardef\pFcomma=\mathcode`, 

 \newmuskip\pGqskip
\mathchardef\pGcomma=\mathcode`, 

\begin{document}
\author[G. E. Andrews]{George E. Andrews}
\address{The Pennsylvania State University, University Park, Pennsylvania 16802, USA}
\email{andrews@math.psu.edu}

\author[M. El Bachraoui]{Mohamed El Bachraoui}
\address{Department of Mathematical Sciences,
United Arab Emirates University, PO Box 15551, Al-Ain, UAE}
\email{melbachraoui@uaeu.ac.ae}

\author[A. Dhar]{Aritram Dhar}
\address{Department of Mathematics, University of Florida, Gainesville, FL 32611, USA}
\email{aritramdhar@ufl.edu}

\author[A. Goswami]{Ankush Goswami}
\address{Department of Mathematics, University of Texas Rio Grande Valley, Edinburg, TX 78541, USA}
\email{ankush.goswami@utrgv.edu}

\author[R. Li]{Runqiao Li}
\address{Department of Mathematics, University of Texas Rio Grande Valley, Edinburg, TX 78541, USA}
\email{runqiao.li@utrgv.edu}
\keywords{integer partitions, weighted partitions, involutions, basic hypergeometric series, Heine's transformation, Rogers--Fine identity, false theta series, quantum modular forms.}
\subjclass[2020]{Primary 05A17, 11P81; Secondary 05A19, 11F37, 33D15}
\begin{abstract}
Let $a_2''(n)$ and $b_2''(n)$ be the signed partition functions introduced by Andrews and El Bachraoui for interval-restricted partitions whose parts greater than $1$ are controlled by the smallest even part and by the number of ones.  We prove two conjectures for these functions.  The first gives the generating function for $a_2''(n)$ as an elementary rational term plus a false theta series with periodic signs; the second asserts that the companion coefficients $b_2''(n)$ take only the values $-1,0,1,2$.  The central structural result introduces an auxiliary variable $z$ recording the number of non-compulsory parts greater than $1$.  We obtain closed forms for the two resulting generating functions and prove the master identity $(1+q^2)\mathcal B(z,q)-(1+q)\mathcal A(z,q)=-q^4/(1-q^3)$ using both analytic and combinatorial techniques.  At $z=-1$, this identity, together with a Rogers--Fine evaluation, gives the false theta formula for $a_2''(n)$ and an explicit generating function for $b_2''(n)$.  The latter formula implies the asserted coefficient range and leads to an exact coefficient description of $b_2''(n)$.  We also include a direct Heine--Rogers--Fine proof of the false theta formula, ordinary and fixed-refinement consequences of the master identity, and the resulting quantum modular interpretation.
\end{abstract}
\date{}
\thanks{The first author is partially supported by Simons Foundation Grant 633284.}
\maketitle
\section{Introduction}
Partitions in which the parts are constrained to lie in a short interval often lead to generating functions with a rich basic hypergeometric structure. In~\cite{Andrews-Bachraoui conjectures}, the first two authors studied several such families and introduced signed refinements obtained by weighting each partition by $(-1)^{j+1}$, where $j$ is the number of parts greater than $1$. For two of these signed partition functions, denoted $a_2''(n)$ and $b_2''(n)$, computations suggested unexpectedly rigid behavior: the first was conjectured to have a triangular false theta description, while the second appeared to be confined to the set $\{-1,0,1,2\}$. The purpose of the present paper is to prove these conjectures by combining two complementary viewpoints.  The main structural viewpoint is that both conjectures are consequences of a bivariate identity involving an additional variable $z$.  This is the natural level of generality: the weighted identities arise from the specialization $z=-1$, while the same identity also controls the ordinary and fixed-refinement generating functions.  We also give a direct Heine--Rogers--Fine derivation of the false theta formula for $a_2''(n)$, which makes the false theta and quantum modular structure especially transparent.

We use the standard notation~\cite{Andrews, Gasper-Rahman}
\[
(z;q)_0 = 1,\quad  (z;q)_n = \prod_{j=0}^{n-1} (1-zq^j),\quad
(z;q)_{\infty} = \prod_{j=0}^{\infty} (1-zq^j).
\]
All generating-function identities below are interpreted as identities of formal power series in $q$.

We begin with the family $a_2(n)$ from~\cite[Definition 2]{Andrews-Bachraoui conjectures}. It counts partitions of $n$ such that there are $l-1$ ones, the smallest even part is $2k$, every part greater than $1$ lies in the interval $[2k,2l-2]$, and the odd parts greater than $1$ are distinct. If $j$ denotes the number of parts greater than $1$, then $a_2''(n)$ is the corresponding signed count in which each partition is counted with weight $(-1)^{j+1}$. The generating functions of the sequences $a_2(n)$ and $a_2''(n)$ are
\begin{align*}
\sum_{n\geq 1} a_2(n) q^n &= \sum_{\substack{l\geq 2\\1\leq k\leq l-1}}\fr{(-q^{2k+1};q^2)_{l-k-1}}{(q^{2k};q^2)_{l-k}}q^{2k+l-1} \\
&=q^3+q^4+2q^5+3q^6+5q^7+6q^8+\cdots, \\
\sum_{n\geq 0} a_2''(n) q^n &= \sum_{\substack{l\geq 2\\1\leq k\leq l-1}}\fr{(q^{2k+1};q^2)_{l-k-1}}{(-q^{2k};q^2)_{l-k}}q^{2k+l-1} \\
&=q^3+q^4+q^6+q^7+q^9+q^{12}+\cdots.
\end{align*}
The following conjecture regarding $a_2''(n)$ was stated in the earlier paper~\cite[Conjecture 1]{Andrews-Bachraoui conjectures}.
\begin{conjecture}\label{conj1}
There holds
\begin{equation}\label{conj1-id}
\sum_{n\geq 0} a_2''(n) q^n
=\fr{q}{1-q^2}+\fr{1}{1+q}\sum_{n\geq 0} \chi(n) q^{\binom{n+1}{2}+1},
\end{equation}
where
\[
\chi(n)
=\begin{cases}
-1 &\ \text{if\ }n\equiv 0,1\Mod{4}, \\
1 &\ \text{if\ }n\equiv 2,3\Mod{4}.
\end{cases}
\]
\end{conjecture}

We now turn to the companion family $b_2(n)$, where the interval of allowed parts is enlarged from $[2k,2l-2]$ to $[2k,2l-1]$. This apparently small change produces a more complicated signed generating function, but it again leads to a strikingly simple coefficient pattern.

The companion function $b_2(n)$ is defined analogously: there are $l-1$ ones, the smallest even part is $2k$, every part greater than $1$ lies in the enlarged interval $[2k,2l-1]$, and the odd parts greater than $1$ are distinct. We let $b_2''(n)$ be the corresponding signed count, again with weight $(-1)^{j+1}$ where $j$ is the number of parts greater than $1$. Then
\begin{align*}
\sum_{n\geq 1} b_2(n) q^n &= \sum_{\substack{l\geq 2\\1\leq k\leq l-1}}\fr{(-q^{2k+1};q^2)_{l-k}}{(q^{2k};q^2)_{l-k}}q^{2k+l-1}\\
&=q^3 + q^4 + 2 q^5 + 4 q^6 + 5 q^7 + 7 q^8 + 10 q^9 +\cdots, \\
\sum_{n\geq 0} b_2''(n) q^n &= \sum_{\substack{l\geq 2 \\1\leq k \leq l-1}}\fr{(q^{2k+1};q^2)_{l-k}}{(-q^{2k};q^2)_{l-k}}q^{2k+l-1} \\
&= q^3 + q^4 + q^7 + q^8 - q^{10} + 2 q^{12} - q^{14} + q^{15} \cdots.
\end{align*}
The original second conjecture was the following range assertion~\cite[Conjecture 2]{Andrews-Bachraoui conjectures}.
\begin{conjecture}\label{conj2}
For any positive integer $n$, we have $b_2''(n)\in\{-1,0,1,2\}$.
\end{conjecture}

The central point is that the two families should first be treated together. For a partition counted by $a_2(n)$ or $b_2(n)$, let $r$ denote the number of parts greater than $1$ other than the compulsory smallest even part $2k$. We write $a_2(n;r)$ and $b_2(n;r)$ for the corresponding refined counts and define
\[
\mathcal A(z,q):=\sum_{n\ge 0}\sum_{r\ge 0} a_2(n;r)z^r q^n,\qquad
\mathcal B(z,q):=\sum_{n\ge 0}\sum_{r\ge 0} b_2(n;r)z^r q^n.
\]
Then
\begin{align}
\mathcal A(z,q)
&=\sum_{\substack{l\ge 2\\1\le k\le l-1}}
\frac{(-zq^{2k+1};q^2)_{l-k-1}}{(zq^{2k};q^2)_{l-k}}q^{2k+l-1}, \label{eq zA-def}\\
\mathcal B(z,q)
&=\sum_{\substack{l\ge 2\\1\le k\le l-1}}
\frac{(-zq^{2k+1};q^2)_{l-k}}{(zq^{2k};q^2)_{l-k}}q^{2k+l-1}. \label{eq zB-def}
\end{align}
By construction,
\[
\mathcal A(1,q)=\sum_{n\ge 0} a_2(n)q^n,\qquad \mathcal B(1,q)=\sum_{n\ge 0} b_2(n)q^n.
\]
Moreover, if $j$ is the number of parts greater than $1$, then $j=r+1$; hence $(-1)^r=(-1)^{j+1}$. Therefore
\[
\mathcal A(-1,q)=\sum_{n\ge 0} a_2''(n)q^n,\qquad
\mathcal B(-1,q)=\sum_{n\ge 0} b_2''(n)q^n.
\]

Let
\[
F(z,q):=\sum_{n\ge 0}\frac{(-zq;q^2)_n q^n}{(zq^2;q^2)_n},
\qquad
H(z,q):=\sum_{n\ge 0}\frac{(-zq;q^2)_n q^n}{(z;q^2)_n}.
\]
Only the product $(1-z)H(z,q)$ occurs in the closed form for $\mathcal B(z,q)$; equivalently,
\[
(1-z)H(z,q)=\sum_{n\ge0}(1-zq^{2n})\frac{(-zq;q^2)_nq^n}{(zq^2;q^2)_n},
\]
so the expression is regular at $z=1$ in the combinations used below.
The following theorem gives the bivariate closed forms and the master identity relating them.

\begin{maintheorem}
One has
\begin{align}
z\mathcal A(z,q)&=\frac{q}{1+q}F(z,q)-\frac{q}{1-q^2}, \label{eq zA-closed}\\
z\mathcal B(z,q)&=\frac{1-z}{1+q}H(z,q)-\frac{1}{1-q^2}
+\frac{z}{(1+q)(1-q^3)}-\frac{zq^2}{1-q^3}, \label{eq zB-closed}
\end{align}
and consequently
\begin{equation}\label{eq master bivariate}
(1+q^2)\mathcal B(z,q)-(1+q)\mathcal A(z,q)=-\frac{q^4}{1-q^3}.
\end{equation}
\end{maintheorem}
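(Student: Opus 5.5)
The plan is to reindex the double sums so that the inner sum over $k$ telescopes, and then to reduce everything to the single series $F(z,q)$ using one linear $q$-difference relation.

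\textbf{Reindexing.} Put $N=l-1\ge 1$, so that $1\le k\le N$ and $q^{2k+l-1}=q^{N}q^{2k}$. Using
\[
(-zq^{2k+1};q^2)_{N-k}=\frac{(-zq;q^2)_N}{(-zq;q^2)_k},\qquad
(zq^{2k};q^2)_{N-k+1}=\frac{(zq^2;q^2)_N}{(zq^2;q^2)_{k-1}},
\]
we can rewrite \eqref{eq zA-def} as
\[
\mathcal A(z,q)=\sum_{N\ge1}\frac{(-zq;q^2)_N\,q^N}{(zq^2;q^2)_N}\sum_{k=1}^{N}\frac{(zq^2;q^2)_{k-1}}{(-zq;q^2)_k}\,q^{2k}.
\]

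\textbf{Telescoping the inner sum.} Set $G_k=(zq^2;q^2)_k/(-zq;q^2)_k$. A one-line computation gives
\[
G_k-G_{k-1}=-zq^{2k-1}(1+q)\,\frac{(zq^2;q^2)_{k-1}}{(-zq;q^2)_k}.
\]
Hence the inner sum equals $-\frac{q}{z(1+q)}(G_N-1)$. Substituting back, and writing $T_N=(-zq;q^2)_Nq^N/(zq^2;q^2)_N$ for the $N$-th term of $F$, we get
\[
z\mathcal A=\frac{q}{1+q}\sum_{N\ge1}\bigl(T_N-q^N\bigr)=\frac{q}{1+q}(F-1)-\frac{q^2}{1-q^2}.
\]
This simplifies to \eqref{eq zA-closed}.

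\textbf{The closed form for $\mathcal B$.} The only change in \eqref{eq zB-def} is the extra numerator factor $(1+zq^{2N+1})$. The same telescoping therefore gives
\[
z\mathcal B=-\frac{q}{1+q}\sum_{N\ge1}\bigl(q^N-(1+zq^{2N+1})T_N\bigr).
\]
So $z\mathcal B$ is an explicit rational function plus $\frac{q}{1+q}\bigl[(F-1)+zq(E-1)\bigr]$, where
\[
E:=\sum_{n\ge0}q^{2n}T_n .
\]
By the identity displayed before the theorem, $(1-z)H=F-zE$. I then check \eqref{eq zB-closed} by eliminating $E$. For this I use the ratio
\[
\frac{T_{n+1}}{T_n}=\frac{q(1+zq^{2n+1})}{1-zq^{2n+2}},
\]
that is, $(1-zq^{2n+2})T_{n+1}=q(1+zq^{2n+1})T_n$. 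Summing this over $n\ge0$ yields the linear relation
\[
(F-1)-z(E-1)=qF+zq^2E,
\]
so that $zE=\bigl((1-q)F-1+z\bigr)/(1+q^2)$.

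\textbf{Master identity.} Every quantity ($z\mathcal A$, $z\mathcal B$, $(1-z)H$) is now an affine function of $F$ with rational coefficients. Forming $(1+q^2)z\mathcal B-(1+q)z\mathcal A$, the coefficient of $F$ should cancel; what remains is a rational function that must equal $-zq^4/(1-q^3)$. Dividing by $z$ then gives \eqref{eq master bivariate}.

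\textbf{Main obstacle.} I expect the main obstacle to be the bookkeeping of these rational terms, especially matching the precise form \eqref{eq zB-closed} with its $1/(1-q^3)$ pieces. That denominator does not arise from the telescoping itself, so it must come out of the elimination of $E$. I would verify it by clearing denominators and comparing a few coefficients as a sanity check.
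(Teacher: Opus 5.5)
Your treatment of $\mathcal A$ is correct, but the step for $\mathcal B$ contains a concrete error. The extra factor $(1+zq^{2N+1})$ multiplies the entire inner $k$-sum, so it also multiplies the term $T_NG_N=q^N$ produced by the telescoping. The correct outcome is
\[
z\mathcal B=-\frac{q}{1+q}\sum_{N\ge1}\bigl(1+zq^{2N+1}\bigr)\bigl(q^N-T_N\bigr),
\]
not $-\frac{q}{1+q}\sum_{N\ge1}\bigl(q^N-(1+zq^{2N+1})T_N\bigr)$. You have lost the rational term $-\frac{q}{1+q}\sum_{N\ge1}zq^{3N+1}=-\frac{zq^5}{(1+q)(1-q^3)}$.

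That term is exactly where the $1/(1-q^3)$ pieces of \eqref{eq zB-closed} come from. Your expectation that they will emerge from eliminating $E$ is wrong, since that elimination only introduces the denominator $1+q^2$. With your formula as displayed, the $F$-terms still cancel in $(1+q^2)z\mathcal B-(1+q)z\mathcal A$, but you would end up with $-q^4/(1+q)$ instead of $-q^4/(1-q^3)$. Your $z\mathcal B$ would also exceed the right side of \eqref{eq zB-closed} by $zq^5/((1+q)(1-q^3))$. This is already visible in the coefficient of $z^1$ in $z\mathcal B$, which must equal $\mathcal B(0,q)=q^3/((1-q)(1-q^3))$.

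With the corrected formula everything closes. Substituting $zE=((1-q)F-1+z)/(1+q^2)$ gives $z\mathcal B=\frac{q}{1+q^2}F$ plus a rational function of $z$ and $q$. The $z$-free rational terms of $(1+q^2)z\mathcal B-(1+q)z\mathcal A$ cancel, and the $z$-terms combine as $-\frac{zq^4}{1+q}\bigl(1+\frac{q(1+q^2)}{1-q^3}\bigr)=-\frac{zq^4}{1-q^3}$. Using $(1-z)H=F-zE$, one likewise verifies \eqref{eq zB-closed}.

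Once repaired, your route is a genuinely more elementary alternative to the paper's. The paper keeps $k$ as the outer index and expresses each inner $l$-sum through iterated shift formulas in terms of $F$ or $H$. It then needs the Andrews--Subbarao--Vidyasagar identity twice, with infinite products that eventually cancel. Summing over $k$ first for fixed $N=l-1$ instead makes the inner sum telescope in closed form, so no infinite products ever appear. Your relation $(F-1)-z(E-1)=qF+zq^2E$ is equivalent to the paper's \eqref{eq FH-relation}.
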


Writing
\[
A(q):=\sum_{n\ge 0} a_2''(n)q^n,\qquad B(q):=\sum_{n\ge 0} b_2''(n)q^n,
\]
the specialization $z=-1$ of~\eqref{eq master bivariate} gives
\begin{equation}\label{eq weighted relation}
(1+q^2)B(q)-(1+q)A(q)=-\frac{q^4}{1-q^3}.
\end{equation}
Equivalently, for every integer $n\ge 0$,
\begin{equation}\label{eq weighted relation coeff}
b_2''(n)+b_2''(n-2)=a_2''(n)+a_2''(n-1)-\mathbf 1_{\{n\ge4,\ n\equiv1\pmod 3\}},
\end{equation}
where $a_2''(m)=b_2''(m)=0$ for $m<0$, and $\mathbf 1_{\mathcal P}$ denotes the indicator of the property $\mathcal P$.

The first main consequence is the proof of Conjecture~\ref{conj1}.
\begin{theorem}\label{thm main-1}
Conjecture~\ref{conj1} is true; that is,
\[
A(q)=\fr{q}{1-q^2}+\fr{1}{1+q}\sum_{n\geq 0} \chi(n) q^{\binom{n+1}{2}+1}.
\]
\end{theorem}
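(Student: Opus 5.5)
We plan to obtain Theorem~\ref{thm main-1} by specializing the closed form \eqref{eq zA-closed} of the Main Theorem at $z=-1$. Since $\mathcal A(-1,q)=A(q)$, \eqref{eq zA-closed} at $z=-1$ gives
\[
-A(q)=\frac{q}{1+q}F(-1,q)-\frac{q}{1-q^2},
\qquad
F(-1,q)=\sum_{n\ge0}\frac{(q;q^2)_n q^n}{(-q^2;q^2)_n}.
\]
So the theorem is equivalent to
\[
\frac{q}{1+q}F(-1,q)=-\frac{1}{1+q}\sum_{n\ge0}\chi(n)q^{\binom{n+1}{2}+1},
\]
that is, to the false theta evaluation
\[
\sum_{n\ge0}\frac{(q;q^2)_n q^n}{(-q^2;q^2)_n}=-\sum_{n\ge0}\chi(n)q^{\binom{n+1}{2}}.
\]
The signs $-\chi(n)=1,1,-1,-1,\dots$ form the familiar pattern $(-1)^{\lfloor n/2\rfloor}$. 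The whole proof therefore reduces to this single $q$-series identity.

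To prove that identity we will use the Rogers--Fine identity in base $p=q^2$:
\[
\sum_{n\ge0}\frac{(a p;p)_n}{(bp;p)_n}t^n=\sum_{n\ge0}\frac{(ap;p)_n\,(atp/b;p)_n\, b^n t^n p^{n^2}(1-atp^{2n+1})}{(bp;p)_n\,(t;p)_{n+1}}.
\]
We take $ap=q$, so $a=q^{-1}$; $bp=-q^2$, so $b=-1$; and $t=q$. Then $atp/b=-q^2$, and $(atp/b;p)_n$ cancels $(bp;p)_n$. Also $at=1$, so the factor becomes $(1-q^{4n+2})$. Finally $(t;p)_{n+1}=(q;q^2)_{n+1}$, which cancels against $(q;q^2)_n$ up to a factor $1/(1-q^{2n+1})$. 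Each summand thus collapses to
\[
(-1)^n q^{2n^2+n}\,\frac{1-q^{4n+2}}{1-q^{2n+1}}=(-1)^n q^{2n^2+n}\bigl(1+q^{2n+1}\bigr).
\]
The exponents $2n^2+n=\binom{2n+1}{2}$ and $2n^2+3n+1=\binom{2n+2}{2}$ are consecutive triangular numbers $\binom{m+1}{2}$ with $m=2n$ and $m=2n+1$, both carrying sign $(-1)^n=(-1)^{\lfloor m/2\rfloor}$. This is exactly $-\sum\chi(m)q^{\binom{m+1}{2}}$.

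The main obstacle is bookkeeping rather than ideas. We must choose the Rogers--Fine parametrization so that the cancellations occur; checking that $at=1$ is what turns the factor $1-atp^{2n+1}$ into $1-q^{4n+2}$ is the key point. We must also track the constant terms $q/(1-q^2)$ and the sign from $z=-1$ on the left side carefully. As a sanity check, we will compare low-order coefficients with the listed expansion $q^3+q^4+q^6+q^7+q^9+\cdots$. If the Rogers--Fine specialization proved awkward, the fallback would be to apply Heine's transformation to $F(-1,q)$ first, converting it into a series with a visible telescoping structure, and then apply Rogers--Fine.
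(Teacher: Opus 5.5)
Your proposal is correct and is essentially the paper's proof: specialize \eqref{eq zA-closed} at $z=-1$ to get $A(q)=\frac{q}{1-q^2}-\frac{q}{1+q}F(-1,q)$, then evaluate $F(-1,q)$ with the Rogers--Fine identity in base $q^2$. Your choice $a=q^{-1}$, $b=-1$, $t=q$ in Fine's normalization is the same specialization as the paper's $(\alpha,\beta,\tau)=(q,-q^2,q)$, and it gives the same collapse to $\sum_{n\ge0}(-1)^nq^{2n^2+n}(1+q^{2n+1})=-\sum_{n\ge0}\chi(n)q^{\binom{n+1}{2}}$.
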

As a consequence, $a_2''(n)$ is completely determined by a simple parity rule on triangular intervals, recovering~\cite[Proposition 1]{Andrews-Bachraoui conjectures}.
\begin{corollary}\label{cor a2-0,1}
Set $\beta(1)=\beta(2)=0$.  For $n\geq 3$, define $\beta(n)$ as follows:

(a)\ If $\binom{2k}{2} < n\leq \binom{2k+1}{2}$ for some nonnegative integer $k$, then
\[
\beta(n)
=\begin{cases}
1 &\ \text{if $n$ is odd}, \\
0 &\ \text{if $n$ is even}.
\end{cases}
\]
(b)\ If $\binom{2k+1}{2} < n\leq \binom{2k+2}{2}$ for some nonnegative integer $k$, then
\[
\beta(n)
=\begin{cases}
1 &\ \text{if $n$ is even}, \\
0 &\ \text{if $n$ is odd}.
\end{cases}
\]
Then for any positive integer $n$, we have $a_2''(n)=\beta(n)$.
\end{corollary}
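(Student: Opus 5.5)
The corollary follows from Theorem~\ref{thm main-1} by extracting coefficients directly. Write $T_n=\binom{n+1}{2}$ and expand $\frac{1}{1+q}=\sum_{m\ge0}(-1)^mq^m$. For $N\ge1$, the coefficient of $q^N$ in $\frac{q}{1-q^2}$ is $1$ if $N$ is odd and $0$ if $N$ is even. The coefficient of $q^N$ in the false theta term is
\[
\sum_{\substack{n\ge0\\ T_n+1\le N}}\chi(n)(-1)^{N-1-T_n}
=(-1)^{N-1}\sum_{n=0}^{M}\chi(n)(-1)^{T_n},
\]
where $M$ is the unique integer with $T_M<N\le T_{M+1}$. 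Such an $M$ exists because $T_0=0<N$ and $T_n\to\infty$.

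The key step is to simplify the summand $\chi(n)(-1)^{T_n}$. The parity of $T_n$ is $4$-periodic in $n$: $T_n$ is even for $n\equiv0,3\pmod4$ and odd for $n\equiv1,2\pmod4$. Combining this with the definition of $\chi$ case by case over $n\bmod 4$ gives
\[
\chi(n)(-1)^{T_n}=(-1)^{n+1}\quad\text{for all } n\ge0.
\]
The inner sum is therefore a truncated alternating sum. It equals $-1$ if $M$ is even and $0$ if $M$ is odd. Hence the false theta contribution to the coefficient of $q^N$ is $(-1)^N$ when $M$ is even and $0$ when $M$ is odd.

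Now combine the two contributions to get $a_2''(N)$.
\begin{itemize}
\item If $M=2k$ is even, then $a_2''(N)=\mathbf 1_{\{N \text{ odd}\}}+(-1)^N$, which is $1$ for $N$ even and $0$ for $N$ odd. Here $\binom{2k+1}{2}<N\le\binom{2k+2}{2}$, which is exactly case (b).
\item If $M=2k-1$ is odd, then $a_2''(N)=\mathbf 1_{\{N\text{ odd}\}}$. Here $\binom{2k}{2}<N\le\binom{2k+1}{2}$, which is exactly case (a).
\end{itemize}

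Finally, check the small values. For $N=1$ we have $M=0$ and $N$ odd, so the value is $0$. For $N=2$ we have $M=1$ and $N$ even, so the value is again $0$. This agrees with $\beta(1)=\beta(2)=0$, and for $N\ge3$ the two cases above are precisely the definition of $\beta(N)$.

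There is no real obstacle here. The only point needing care is the sign bookkeeping in the identity $\chi(n)(-1)^{T_n}=(-1)^{n+1}$, together with matching the half-open intervals $T_M<N\le T_{M+1}$ to the intervals in (a) and (b). Both are finite mod-$4$ checks.
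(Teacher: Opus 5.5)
Your proposal is correct and takes the route the paper intends. The paper does not write out a proof of this corollary; it presents it as an immediate consequence of Theorem~\ref{thm main-1}, recovering Proposition~1 of the earlier paper. Your coefficient extraction supplies exactly the omitted verification: the identity $\chi(n)(-1)^{T_n}=(-1)^{n+1}$, the truncated alternating sum over $0\le n\le M$, and the matching of $T_M<N\le T_{M+1}$ with cases (a) and (b).
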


The second main consequence is the corresponding formula for $b_2''(n)$; the conjectured range follows from this formula.
\begin{theorem}\label{thm main-2}
There holds
\begin{equation}\label{conj2-id}
B(q)
=\fr{q+q^2+q^3-q^4}{(1-q)(1+q^2)(1+q+q^2)}+\fr{q}{1+q^2}\sum_{n\geq 0} \chi(n) q^{\binom{n+1}{2}}.
\end{equation}
Consequently, for any positive integer $n$, we have
\[
b_2''(n)\in\{-1,0,1,2\}.
\]
\end{theorem}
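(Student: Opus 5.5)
The plan is to combine the specialized master identity \eqref{eq weighted relation} with Theorem~\ref{thm main-1}. From \eqref{eq weighted relation},
\[
(1+q^2)B(q)=(1+q)A(q)-\frac{q^4}{1-q^3}.
\]
Substituting the formula for $A(q)$ from Theorem~\ref{thm main-1} gives
\[
(1+q)A(q)=\frac{q}{1-q}+\sum_{n\ge0}\chi(n)q^{\binom{n+1}{2}+1}.
\]
The rational part then combines as
\[
\frac{q}{1-q}-\frac{q^4}{1-q^3}=\frac{q(1+q+q^2)-q^4}{1-q^3}=\frac{q+q^2+q^3-q^4}{(1-q)(1+q+q^2)}.
\]
Dividing by $1+q^2$ yields \eqref{conj2-id} directly, so the identity itself is immediate.

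For the range assertion, I would split $B(q)=R(q)+\Theta(q)$ into the rational part and the false theta part, and compute the coefficients of each explicitly.

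\emph{Rational part.} Since $(1-q)(1+q^2)(1+q+q^2)$ divides $1-q^{12}$, the series $R(q)$ has eventually periodic coefficients of period $12$. Expanding $R(q)(1-q^{12})$ as a polynomial of degree less than $12$, I will tabulate $r(N)$ for each residue $N \bmod 12$. The $(1-q)$ factor is harmless here: the partial-fraction residue at $q=1$ is $2/(2\cdot 3)=1/3$, which combines with the other periodic pieces to give integer values.

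\emph{Theta part.} Write $T_n=\binom{n+1}{2}$ and expand $1/(1+q^2)=\sum_m(-1)^mq^{2m}$. The coefficient of $q^N$ in $\Theta$ is then
\[
\theta(N)=\sum_{\substack{n\ge0,\ T_n+1\le N\\ T_n\equiv N-1 \ (\mathrm{mod}\ 2)}}\chi(n)(-1)^{(N-1-T_n)/2}.
\]
Now $T_n$ is even iff $n\equiv0,3\pmod4$, and odd iff $n\equiv1,2\pmod4$. In either parity class the admissible $n$ come in consecutive pairs $\{4m,4m+3\}$ or $\{4m+1,4m+2\}$. Within each pair $\chi$ takes the values $-1$ and $+1$, and $T_n$ differs by $6m+6$ or $4m+3$ respectively. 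Using $T_n \bmod 4$ (determined by $n\bmod8$), I would show that the sign factor $(-1)^{(N-1-T_n)/2}$ is arranged so that successive terms telescope into a sum of blocks that cancel. Consequently $\theta(N)$ equals the contribution of at most one unpaired final term (or a bounded pattern of signed terms), lying in $\{-1,0,1\}$. It is determined by the position of $N$ relative to the triangular numbers, by $n\bmod 8$ of the last admissible index, and by $N\bmod4$.

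\emph{Main obstacle.} The hard step is the final combination. It is not enough to have $|r(N)|$ and $|\theta(N)|$ small separately; I must show $r(N)+\theta(N)\in\{-1,0,1,2\}$. This requires correlating the residue $N\bmod 12$, which governs $r(N)$, with the sign of the last unpaired theta term, which depends on $N\bmod 4$ and on which triangular interval contains $N$.

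I would first try to simplify by multiplying \eqref{conj2-id} by $1+q^2$. This gives the recurrence $b_2''(N)+b_2''(N-2)=c(N)$, where $c(N)$ is explicit: a period-$3$ rational contribution plus a sparse theta term, exactly as in \eqref{eq weighted relation coeff} combined with Corollary~\ref{cor a2-0,1}. Since $a_2''\in\{0,1\}$ by Corollary~\ref{cor a2-0,1}, I would then run an induction on $N$ separately along each parity class. The inductive step is a finite check of the possible values of $b_2''(N-2)$ against $c(N)$, using the triangular-interval parity rule to control when $a_2''(N)+a_2''(N-1)$ changes.
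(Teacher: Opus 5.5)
Your derivation of \eqref{conj2-id} from \eqref{eq weighted relation} and Theorem~\ref{thm main-1} is correct and is exactly the paper's argument. The range assertion, however, is not established.

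\textbf{The theta analysis is wrong.} With $T_n=\binom{n+1}{2}$ one has $T_{4m+3}-T_{4m}=12m+6$ and $T_{4m+2}-T_{4m+1}=4m+2$, not $6m+6$ and $4m+3$. Both differences are $\equiv 2\pmod 4$. So inside each of your pairs the factor $(-1)^{(N-1-T_n)/2}$ flips sign together with $\chi$, and the two contributions \emph{add} rather than cancel.

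Consequently your conclusion $\theta(N)\in\{-1,0,1\}$ is false. Expanding $\frac{q}{1+q^2}\sum\chi(n)q^{T_n}$ directly gives $\theta(7)=\theta(8)=2$ and $\theta(9)=\theta(10)=-2$. The pairs that genuinely cancel are $\{4m+3,4m+4\}$ and $\{4m+2,4m+5\}$, or the paper's $j\leftrightarrow j+4$. With these, the first index $n=0$ or $n=1$ always survives, possibly together with a last unpaired index, so values $\pm2$ occur.

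\textbf{The missing idea.} Your stated ``main obstacle'' is the entire content of the range proof, and you leave it unresolved. Magnitude bounds alone cannot work: $|r(N)|\le 2$ and $|\theta(N)|\le 2$ allow sums anywhere from $-4$ to $4$.

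What closes the argument is that the \emph{sign} of $\theta(N)$ depends only on $N\bmod 4$, not on the triangular interval. Lemma~\ref{lem false theta block} gives $\theta(N)=\mu_u(N)\chi(N-1)$ with $\mu_u(N)\in\{0,1,2\}$. Hence $\theta(N)\in\{0,1,2\}$ for $N\equiv 0,3\pmod 4$ and $\theta(N)\in\{0,-1,-2\}$ for $N\equiv 1,2\pmod 4$.

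The period-$12$ coefficients of $R$ are anti-correlated with this in exactly the right way. They lie in $\{0,-1\}$ for $N\equiv 0,3\pmod 4$ and in $\{1,2\}$ for $N\equiv 1,2\pmod 4$. Both resulting sumsets sit inside $\{-1,0,1,2\}$.

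\textbf{The fallback induction does not close.} From $b_2''(N-2)\in\{-1,0,1,2\}$ and $b_2''(N)+b_2''(N-2)\in\{-1,0,1,2\}$ you obtain only $b_2''(N)\in\{-3,\dots,3\}$. The right-hand side depends on triangular intervals and is not periodic, so there is no finite set of cases to check. Making the induction work would require carrying the exact value of $b_2''(N-2)$, which amounts to proving Corollary~\ref{cor exact formula for bpp}.
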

Thus Theorem~\ref{thm main-2} proves Conjecture~\ref{conj2}, and in fact gives a sharper generating-function statement than the original range assertion.

Two further consequences come directly from specializations of the bivariate identity.
Putting $z=1$ in~\eqref{eq master bivariate} immediately gives the ordinary specialization.
\begin{corollary}\label{cor ordinary relation}
There holds
\begin{equation}\label{eq ordinary relation}
(1+q^2)\sum_{n\ge 0} b_2(n)q^n-(1+q)\sum_{n\ge 0} a_2(n)q^n=-\frac{q^4}{1-q^3}.
\end{equation}
\end{corollary}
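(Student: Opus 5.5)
The plan is to specialize the Main Theorem at $z=1$. This is legitimate because $\mathcal A(z,q)$ and $\mathcal B(z,q)$ are defined by the series \eqref{eq zA-def} and \eqref{eq zB-def}, and each of these is a formal power series in $q$ whose coefficients are polynomials in $z$. Indeed, for fixed $l,k$ the summand carries the factor $q^{2k+l-1}$ with $2k+l-1\ge l+1$, so only finitely many pairs $(l,k)$ contribute to any given power of $q$. Moreover, each factor $1/(1-zq^{2k+2i})$ expands as a geometric series in $zq^{2k+2i}$, so the coefficient of $q^n$ is a polynomial in $z$. This is consistent with the combinatorial description: $a_2(n;r)$ and $b_2(n;r)$ vanish for $r>n$.

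Consequently, \eqref{eq master bivariate} is an identity in $\mathbb Z[z][[q]]$. Its right-hand side $-q^4/(1-q^3)$ does not depend on $z$. Comparing coefficients of $q^n$ on both sides therefore gives a polynomial identity in $z$ for each $n$, and we may evaluate it at $z=1$.

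Setting $z=1$ and using $\mathcal A(1,q)=\sum_{n\ge0}a_2(n)q^n$ and $\mathcal B(1,q)=\sum_{n\ge0}b_2(n)q^n$, both noted right after \eqref{eq zB-def}, yields \eqref{eq ordinary relation} immediately.

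The only point needing care is that the closed forms \eqref{eq zA-closed} and \eqref{eq zB-closed} involve $H(z,q)$, which has the factor $1/(z;q^2)_n$ and is singular at $z=1$. We avoid this entirely by applying the specialization to \eqref{eq master bivariate} itself, which is stated for the series $\mathcal A$ and $\mathcal B$ and not for the closed forms. If one prefers to go through the closed forms, one uses the regularized expression for $(1-z)H(z,q)$ given before the Main Theorem. At $z=1$ this becomes $\sum_{n\ge0}(1-q^{2n})(-q;q^2)_nq^n/(q^2;q^2)_n$, which is a well-defined power series. There is no genuine obstacle here, since the corollary is a direct evaluation.
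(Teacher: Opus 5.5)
Your proposal is correct and matches the paper, which obtains the corollary by putting $z=1$ in \eqref{eq master bivariate} and using $\mathcal A(1,q)=\sum_{n\ge0}a_2(n)q^n$ and $\mathcal B(1,q)=\sum_{n\ge0}b_2(n)q^n$. Your extra justification is a sound elaboration of that one-line step: since $\mathcal A,\mathcal B\in\mathbb Z[z][[q]]$, the specialization is coefficientwise evaluation, which sidesteps the singularity of $H(z,q)$ at $z=1$.
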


The fixed-refinement specialization gives a coefficient-wise relation for each positive power of $z$; its proof is given in Section~\ref{sec proofs corollaries}.
\begin{corollary}\label{cor fixed-r}
Let
\[
A_r(q):=\sum_{n\ge 0} a_2(n;r)q^n,\qquad B_r(q):=\sum_{n\ge 0} b_2(n;r)q^n.
\]
Then
\[
A_0(q)=B_0(q)=\frac{q^3}{(1-q)(1-q^3)},
\]
and for every integer $r\ge 1$,
\begin{equation}\label{eq fixed-r relation}
(1+q^2)B_r(q)=(1+q)A_r(q).
\end{equation}
Equivalently, for every $r\ge 1$ and every integer $n\ge 0$,
\begin{equation}\label{eq fixed-r coeff relation}
b_2(n;r)+b_2(n-2;r)=a_2(n;r)+a_2(n-1;r),
\end{equation}
where $a_2(m;r)=b_2(m;r)=0$ for $m<0$.
\end{corollary}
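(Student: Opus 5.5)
The plan is to extract the coefficient of $z^r$ from the master identity~\eqref{eq master bivariate}. Write $\mathcal A(z,q)=\sum_{r\ge0}A_r(q)z^r$ and $\mathcal B(z,q)=\sum_{r\ge0}B_r(q)z^r$. These expansions are legitimate as formal power series in $q$: for each fixed $n$, only finitely many $r$ occur, since each of the $r$ non-compulsory parts exceeds $1$. The right-hand side $-q^4/(1-q^3)$ does not involve $z$. Comparing coefficients of $z^r$ for $r\ge1$ therefore gives $(1+q^2)B_r(q)-(1+q)A_r(q)=0$ at once, which is~\eqref{eq fixed-r relation}. Reading off the coefficient of $q^n$ then yields~\eqref{eq fixed-r coeff relation}, with the stated convention that terms of negative index vanish.

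For $r=0$, I would compute $A_0$ and $B_0$ directly from~\eqref{eq zA-def} and~\eqref{eq zB-def} by setting $z=0$. In both sums every $q$-Pochhammer factor then equals $1$, so
\[
A_0(q)=B_0(q)=\sum_{l\ge2}\sum_{k=1}^{l-1}q^{2k+l-1}.
\]
Substituting $l=k+m$ with $k,m\ge1$ gives
\[
\sum_{k\ge1}\sum_{m\ge1}q^{3k+m-1}=\frac{q^3}{1-q^3}\cdot\frac{1}{1-q},
\]
which is the claimed closed form. Combinatorially, this counts partitions consisting of $l-1$ ones together with the single part $2k$, where $k\le l-1$. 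Such a partition has no other parts greater than $1$, so the distinctness condition on odd parts and the difference between the intervals $[2k,2l-2]$ and $[2k,2l-1]$ play no role. This explains why $A_0=B_0$.

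As a consistency check, the $z^0$ coefficient of~\eqref{eq master bivariate} should read
\[
(q^2-q)\frac{q^3}{(1-q)(1-q^3)}=-\frac{q^4}{1-q^3},
\]
and it does. So the whole inhomogeneous term of the master identity is carried by the $r=0$ stratum, which confirms the homogeneous relation for $r\ge1$.

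I expect no real obstacle here. The only point needing care is justifying the coefficient extraction in $z$, that is, confirming that both $\mathcal A$ and $\mathcal B$ lie in $\mathbb Z[z][[q]]$. This holds because the series in $z$ coefficient for each $q^n$ is a finite count of partitions.
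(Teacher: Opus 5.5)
Your proposal is correct and follows essentially the same route as the paper: set $z=0$ in the defining series to get $A_0=B_0=q^3/((1-q)(1-q^3))$, then extract the coefficient of $z^r$ for $r\ge1$ from the master identity and compare coefficients of $q^n$. Your justification that $\mathcal A,\mathcal B\in\mathbb Z[z][[q]]$ and your $z^0$ consistency check are correct additions that the paper leaves implicit.
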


The range assertion in Theorem~\ref{thm main-2} can be sharpened to the following exact coefficient formula. Its proof is also given in Section~\ref{sec proofs corollaries}.

\begin{corollary}\label{cor exact formula for bpp}
Let $\rho(n)$ be the $12$-periodic function defined by
\[
\rho(n)=
\begin{cases}
1, & n\equiv 1,2,5,10\pmod{12},\\
2, & n\equiv 6,9\pmod{12},\\
-1, & n\equiv 4,7,8,11\pmod{12},\\
0, & n\equiv 0,3\pmod{12}.
\end{cases}
\]
For $1\le u\le 8$, set
\begin{equation}\label{eq mu-definition}
\mu_u(n)=
\begin{cases}
\dfrac{1-(-1)^n}{2}, & u=1,7,\\[6pt]
1, & u=2,6,\\[4pt]
1+\dfrac{1+(-1)^n}{2}, & u=3,5,\\[6pt]
2, & u=4,\\[4pt]
0, & u=8.
\end{cases}
\end{equation}
Let $T_s=\binom{s+1}{2}$ for $s\ge 0$.  Then $b_2''(0)=0$, and for every $n\ge 1$ with
\[
T_{8m+u-1}<n\le T_{8m+u}\qquad (m\ge 0,\ 1\le u\le 8),
\]
we have
\begin{equation}\label{eq exact bpp}
b_2''(n)=\rho(n)+\mu_u(n)\,\chi(n-1).
\end{equation}
\end{corollary}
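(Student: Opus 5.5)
The plan is to read off the coefficients of $B(q)$ directly from the formula \eqref{conj2-id} in Theorem~\ref{thm main-2}, treating the rational part and the false theta part separately. We will show that the rational part contributes the periodic term $\rho(n)$ and the theta part contributes $\mu_u(n)\chi(n-1)$.

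\emph{Rational part.} Since $(1-q)(1+q^2)(1+q+q^2)$ divides $1-q^{12}$, we can write
\[
\frac{q+q^2+q^3-q^4}{(1-q)(1+q^2)(1+q+q^2)}=\frac{P(q)}{1-q^{12}}
\]
with $P$ a polynomial of degree at most $12$. The coefficient sequence is then periodic of period $12$ for $n\ge 1$. The only thing to verify is that this sequence equals $\rho(n)$, and for that it suffices to expand the first twelve coefficients and compare with the table defining $\rho$. As a consistency check, the residue at $q=1$ gives average value $2/6=1/3$, and indeed the twelve values of $\rho$ sum to $4$.

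\emph{Theta part.} We use $1/(1+q^2)=\sum_{m\ge0}\mathrm{Re}(i^m)q^m$. For $n\ge1$ this gives
\[
[q^n]\,\frac{q}{1+q^2}\sum_{s\ge0}\chi(s)q^{T_s}
=\sum_{s:\,T_s\le n-1}\chi(s)\,\mathrm{Re}\bigl(i^{\,n-1-T_s}\bigr)
=\mathrm{Re}\Bigl(i^{\,n-1}\sum_{s=0}^{N} c_s\Bigr),
\qquad c_s:=\chi(s)\,i^{-T_s},
\]
where $N$ is determined by $T_N<n\le T_{N+1}$. Writing $N=8m+u-1$ with $1\le u\le 8$ puts us exactly in the interval $T_{8m+u-1}<n\le T_{8m+u}$ of the statement.

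Since $\chi$ has period $4$ and $T_s \bmod 4$ has period $8$, the sequence $c_s$ is $8$-periodic. Its values for $s=0,\dots,7$ are
\[
-1,\ i,\ -i,\ 1,\ -1,\ i,\ -i,\ 1,
\]
because $T_s\equiv 0,1,3,2,2,3,1,0 \pmod 4$. The sum over a full period is $0$. Hence $\sum_{s\le N}c_s$ depends only on $u$, through the partial sums $\Sigma_u:=\sum_{s=0}^{u-1}c_s$:
\[
\Sigma_u=-1,\ -1+i,\ -1,\ 0,\ -1,\ -1+i,\ -1,\ 0 \qquad (u=1,\dots,8).
\]

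It then remains to check, for each $u$ and each residue of $n$ mod $4$, that
\[
\mathrm{Re}\bigl(i^{n-1}\Sigma_u\bigr)=\mu_u(n)\chi(n-1).
\]
This is a finite table. For example, $u=8$ gives $0$, matching $\mu_8=0$. For $u=1,3$ we get $-\mathrm{Re}(i^{n-1})$, which vanishes for $n$ even.

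\emph{Main obstacle.} This last matching is where I expect the real difficulty. The products $\mu_u(n)\chi(n-1)$ as literally stated must agree with these real parts once the rational part is correctly identified with $\rho$. The pieces may not line up exactly as written: for instance, the case $u=3$ with $n$ even has $\mu_3=2$ while the real part computed above is $0$. In that case the discrepancy must be a $12$-periodic or $4$-periodic correction that has been redistributed between $\rho$ and the $\mu_u$ terms.

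So I would first compute both sides numerically for $n\le 48$ to pin down the correct bookkeeping, including the value of $\chi(n-1)$ paired with each sign $\mathrm{Re}(i^{n-1})$. I would then re-derive the split into $\rho$ and $\mu_u\chi$, possibly by moving a periodic part of the theta contribution into the rational term. Finally, $b_2''(0)=0$ is immediate from the formula, since both parts vanish at $q^0$.
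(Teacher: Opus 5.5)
You handle the rational part correctly: $(1-q)(1+q^2)(1+q+q^2)=(1-q^3)(1+q^2)$ divides $1-q^{12}$, and $P(q)=q+q^2-q^4+q^5+2q^6-q^7-q^8+2q^9+q^{10}-q^{11}$ has coefficients exactly $\rho(n)$. The gap is in the theta part. Your table of $c_s=\chi(s)i^{-T_s}$ is wrong for $s=2,3,4,5$: you effectively used the signs $-1,-1,-1,-1,1,1,1,1$, but $\chi$ has period $4$ with values $-1,-1,1,1$. The correct values are
\[
c_2=\chi(2)i^{-3}=i,\quad c_3=\chi(3)i^{-6}=-1,\quad c_4=\chi(4)i^{-10}=1,\quad c_5=\chi(5)i^{-15}=-i.
\]
So one period is $-1,i,i,-1,1,-i,-i,1$, which still sums to $0$. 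The partial sums are then
\[
\Sigma_u=-1,\ -1+i,\ -1+2i,\ -2+2i,\ -1+2i,\ -1+i,\ -1,\ 0 \qquad (u=1,\dots,8).
\]
The discrepancy you found at $u=3$, $n$ even, comes from this slip, not from the statement. For instance, directly $[q^4]C(q)=1+(-1)(-1)=2=\mu_3(4)\chi(3)$, while your table gives $0$. As written, your proposal never carries out the decisive matching step and in fact claims it fails. The remedy you suggest is also not legitimate: numerics and ``redistributing'' periodic pieces between $\rho$ and $\mu_u$ would not prove the corollary. The statement holds exactly as written, with $\rho$ precisely the rational part.

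To close the gap, check the four residues of $n$ modulo $4$. This gives $\mathrm{Re}(i^{n-1}\Sigma)=-\chi(n-1)\,\mathrm{Re}\,\Sigma$ for $n$ odd and $\mathrm{Re}(i^{n-1}\Sigma)=\chi(n-1)\,\mathrm{Im}\,\Sigma$ for $n$ even. Hence $c(n)=\mu_u(n)\chi(n-1)$ with $\mu_u(n)=-\mathrm{Re}\,\Sigma_u$ for odd $n$ and $\mu_u(n)=\mathrm{Im}\,\Sigma_u$ for even $n$. The corrected partial sums give the (odd, even) pairs $(1,0),(1,1),(1,2),(2,2),(1,2),(1,1),(1,0),(0,0)$, which is exactly the definition of $\mu_u$.

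With this repair, your argument is a valid alternative to the paper's proof of the false theta block lemma. The paper pairs indices $j$ and $j+4$ into polynomial blocks $t(m)$ with disjoint supports, counts contributing indices by parity, and separately checks the block endpoints. Your encoding $1/(1+q^2)=\sum_m \mathrm{Re}(i^m)q^m$, together with the vanishing period sum of $c_s$, reduces everything to eight partial sums and handles the endpoints automatically.
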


The paper is organized as follows. In Section~\ref{sec proof master} we give an analytic proof of the Main Theorem. In Section ~\ref{combinatorialproofs}, we supply combinatorial proofs of the various identities appearing in the Main Theorem. Section~\ref{sec proofs main theorems} contains the proofs of the two main consequences: first the proof of Theorem~\ref{thm main-1} from the bivariate identity, then an independent Heine--Rogers--Fine proof of the same false theta formula, and finally the proof of Theorem~\ref{thm main-2}. Section~\ref{sec proofs corollaries} proves Corollaries~\ref{cor fixed-r} and~\ref{cor exact formula for bpp}. We end in Section~\ref{sec quantum} with the quantum modular consequence of the false theta formula.
\section{Proof of the Main Theorem}\label{sec proof master}
We shall use the identity of Andrews, Subbarao, and Vidyasagar~\cite{And-Sub-Vid}
\begin{equation}\label{Andr-Subb-Vidy}
\sum_{n\geq 0} \fr{(x;q)_n}{(y;q)_n} q^n
=\fr{q(x;q)_\infty}{y(y;q)_\infty \big(1-\fr{xq}{y}\big)} + \fr{1-\fr{q}{y}}{1-\fr{xq}{y}}.
\end{equation}

\begin{remark}\label{rem DG specialization}
The identity~\eqref{Andr-Subb-Vidy} is also the $\gamma=0$ specialization of the following three-parameter identity of Dixit and Goswami~\cite[Theorem~1.3]{Dixit-Goswami}:
\begin{align}
\sum_{n\ge 0}\frac{(\alpha,\gamma;q)_n}{(\beta,\alpha\gamma q^2/\beta;q)_n}q^n
&=\frac{\beta^{-1}q}{(1-\alpha q/\beta)(1-\gamma q/\beta)}
\frac{(\alpha,\gamma;q)_\infty}{(\beta,\alpha\gamma q^2/\beta;q)_\infty} \notag\\
&\quad+\frac{(1-q/\beta)(1-\alpha\gamma q/\beta)}{(1-\gamma q/\beta)(1-\alpha q/\beta)}. \label{eq DG three parameter}
\end{align}
Indeed, taking $\alpha=x$, $\beta=y$, and $\gamma=0$ gives~\eqref{Andr-Subb-Vidy}.  This observation suggests a natural route toward parameter-dependent variants of the bivariate identities below.  We do not pursue those extensions here.
\end{remark}

\begin{proof}[Proof of the Main Theorem]
We first record two simple shift formulas. For every integer $k\ge 1$, a direct index shift gives
\begin{align}
\sum_{n\ge 0}\frac{(-zq^{2k+1};q^2)_n q^n}{(zq^{2k+2};q^2)_n}
&=\frac{1-zq^{2k}}{q(1+zq^{2k-1})}
\sum_{n\ge 0}\frac{(-zq^{2k-1};q^2)_n q^n}{(zq^{2k};q^2)_n}
-\frac{1-zq^{2k}}{q(1+zq^{2k-1})}, \label{eq zshift1}\\
\sum_{n\ge 0}\frac{(-zq^{2k+1};q^2)_n q^n}{(zq^{2k};q^2)_n}
&=\frac{1-zq^{2k-2}}{q(1+zq^{2k-1})}
\sum_{n\ge 0}\frac{(-zq^{2k-1};q^2)_n q^n}{(zq^{2k-2};q^2)_n}
-\frac{1-zq^{2k-2}}{q(1+zq^{2k-1})}. \label{eq zshift2}
\end{align}
Indeed, for~\eqref{eq zshift1},
\begin{align*}
q\frac{1+zq^{2k-1}}{1-zq^{2k}}
\sum_{n\ge 0}\frac{(-zq^{2k+1};q^2)_n q^n}{(zq^{2k+2};q^2)_n}
&=\sum_{n\ge 0}\frac{(-zq^{2k-1};q^2)_{n+1} q^{n+1}}{(zq^{2k};q^2)_{n+1}}\\
&=\sum_{n\ge 1}\frac{(-zq^{2k-1};q^2)_n q^n}{(zq^{2k};q^2)_n}\\
&=\sum_{n\ge 0}\frac{(-zq^{2k-1};q^2)_n q^n}{(zq^{2k};q^2)_n}-1,
\end{align*}
which proves~\eqref{eq zshift1}. The proof of~\eqref{eq zshift2} is identical.

Iterating these identities gives
\begin{align}
\sum_{n\ge 0}\frac{(-zq^{2k+1};q^2)_n q^n}{(zq^{2k+2};q^2)_n}
&=\frac{1}{q^k}\frac{(zq^2;q^2)_k}{(-zq;q^2)_k}F(z,q)
-\sum_{j=1}^k\frac{1}{q^{k-j+1}}
\frac{(zq^{2j};q^2)_{k-j+1}}{(-zq^{2j-1};q^2)_{k-j+1}}, \label{eq ziterate1}\\
\sum_{n\ge 0}\frac{(-zq^{2k+1};q^2)_n q^n}{(zq^{2k};q^2)_n}
&=\frac{1}{q^k}\frac{(z;q^2)_k}{(-zq;q^2)_k}H(z,q)
-\sum_{j=1}^k\frac{1}{q^{k-j+1}}
\frac{(zq^{2j-2};q^2)_{k-j+1}}{(-zq^{2j-1};q^2)_{k-j+1}}. \label{eq ziterate2}
\end{align}

We now find closed forms for $\mathcal A(z,q)$ and $\mathcal B(z,q)$. Starting from~\eqref{eq zA-def} and using~\eqref{eq ziterate1}, we obtain
\begin{align*}
\mathcal A(z,q)
&=\sum_{k\ge 1} q^{2k}\sum_{l\ge k+1}
\frac{(-zq^{2k+1};q^2)_{l-k-1}}{(zq^{2k};q^2)_{l-k}}q^{l-1}\\
&=\sum_{k\ge 1}\frac{q^{3k}}{1-zq^{2k}}
\sum_{l\ge 0}\frac{(-zq^{2k+1};q^2)_l q^l}{(zq^{2k+2};q^2)_l}\\
&=F(z,q)\sum_{k\ge 1}\frac{q^{2k}(zq^2;q^2)_{k-1}}{(-zq;q^2)_k}
-\sum_{j\ge 1}\frac{q^{3j-1}}{1+zq^{2j-1}}
\sum_{k\ge 0}\frac{(zq^{2j};q^2)_k q^{2k}}{(-zq^{2j+1};q^2)_k}.
\end{align*}
Applying~\eqref{Andr-Subb-Vidy} with $q\to q^2$, the first sum becomes
\begin{align*}
\sum_{k\ge 1}\frac{q^{2k}(zq^2;q^2)_{k-1}}{(-zq;q^2)_k}
&=\frac{1}{1+zq}\sum_{k\ge 0}\frac{(zq^2;q^2)_k q^{2k+2}}{(-zq^3;q^2)_k}\\
&=-\frac{q(zq^2;q^2)_\infty}{z(1+q)(-zq;q^2)_\infty}+\frac{q}{z(1+q)}.
\end{align*}
Similarly,
\begin{align*}
&\sum_{j\ge 1}\frac{q^{3j-1}}{1+zq^{2j-1}}
\sum_{k\ge 0}\frac{(zq^{2j};q^2)_k q^{2k}}{(-zq^{2j+1};q^2)_k} \\
&=\sum_{j\ge 1}\frac{q^{3j-1}}{1+zq^{2j-1}}
\left(
-\frac{(zq^{2j};q^2)_\infty}{zq^{2j-1}(1+q)(-zq^{2j+1};q^2)_\infty}
+\frac{1+q^{1-2j}/z}{1+q}
\right)\\
&=-\frac{1}{z(1+q)}\sum_{j\ge 1}q^j
\frac{(zq^{2j};q^2)_\infty}{(-zq^{2j-1};q^2)_\infty}
+\frac{q}{z(1-q^2)}\\
&=-\frac{q(zq^2;q^2)_\infty}{z(1+q)(-zq;q^2)_\infty}F(z,q)
+\frac{q}{z(1-q^2)}.
\end{align*}
Therefore
\[
z\mathcal A(z,q)=\frac{q}{1+q}F(z,q)-\frac{q}{1-q^2},
\]
which is~\eqref{eq zA-closed}.

The same method applied to~\eqref{eq zB-def} and~\eqref{eq ziterate2} gives
\begin{align*}
\mathcal B(z,q)
&=\sum_{k\ge 1}q^{2k}\sum_{l\ge k+1}
\frac{(-zq^{2k+1};q^2)_{l-k}}{(zq^{2k};q^2)_{l-k}}q^{l-1}\\
&=\sum_{k\ge 1}q^{3k-1}\sum_{l\ge 0}
\frac{(-zq^{2k+1};q^2)_l q^l}{(zq^{2k};q^2)_l}-\frac{q^2}{1-q^3}\\
&=\frac{1}{q}H(z,q)\sum_{k\ge 1}q^{2k}\frac{(z;q^2)_k}{(-zq;q^2)_k}
-D(z,q)-\frac{q^2}{1-q^3},
\end{align*}
where
\[
D(z,q):=\sum_{k\ge 1}q^{3k-1}\sum_{j=1}^k\frac{1}{q^{k-j+1}}
\frac{(zq^{2j-2};q^2)_{k-j+1}}{(-zq^{2j-1};q^2)_{k-j+1}}.
\]
Again using~\eqref{Andr-Subb-Vidy} with $q\to q^2$, we obtain
\begin{align*}
\frac{1}{q}\sum_{k\ge 1}q^{2k}\frac{(z;q^2)_k}{(-zq;q^2)_k}
&=\frac{1-z}{q(1+zq)}\sum_{k\ge 0}\frac{(zq^2;q^2)_k q^{2k+2}}{(-zq^3;q^2)_k}\\
&=-\frac{(z;q^2)_\infty}{z(1+q)(-zq;q^2)_\infty}+\frac{1-z}{z(1+q)},
\end{align*}
and
\begin{align*}
D(z,q)
&=\sum_{j\ge 0}q^{3j-1}\sum_{k\ge 0}q^{2k}\frac{(zq^{2j};q^2)_k}{(-zq^{2j+1};q^2)_k}-\frac{1}{q(1-q^3)}\\
&=\sum_{j\ge 0}q^{3j-1}
\left(
-\frac{(zq^{2j};q^2)_\infty}{zq^{2j-1}(1+q)(-zq^{2j+1};q^2)_\infty}
+\frac{1+q^{1-2j}/z}{1+q}
\right)-\frac{1}{q(1-q^3)}\\
&=-\frac{1}{z(1+q)}\sum_{j\ge 0}q^j
\frac{(zq^{2j};q^2)_\infty}{(-zq^{2j+1};q^2)_\infty}
+\frac{1}{z(1-q^2)}-\frac{1}{(1+q)(1-q^3)}\\
&=-\frac{(z;q^2)_\infty}{z(1+q)(-zq;q^2)_\infty}H(z,q)
+\frac{1}{z(1-q^2)}-\frac{1}{(1+q)(1-q^3)}.
\end{align*}
Thus
\[
z\mathcal B(z,q)=\frac{1-z}{1+q}H(z,q)-\frac{1}{1-q^2}
+\frac{z}{(1+q)(1-q^3)}-\frac{zq^2}{1-q^3},
\]
which is~\eqref{eq zB-closed}.

It remains to derive the master identity. Let
\[
\nu_n(z,q):=\frac{(-zq;q^2)_n q^n}{(zq^2;q^2)_n}.
\]
Then
\[
F(z,q)=\sum_{n\ge 0}\nu_n(z,q).
\]
Also, since
\[
(z;q^2)_n=\frac{1-z}{1-zq^{2n}}(zq^2;q^2)_n,
\]
we have
\[
(1-z)H(z,q)=\sum_{n\ge 0}(1-zq^{2n})\nu_n(z,q).
\]
Furthermore,
\[
(1-zq^{2n+2})\nu_{n+1}(z,q)=q(1+zq^{2n+1})\nu_n(z,q),
\]
so
\begin{align*}
&(1-zq^{2n})\nu_n(z,q)-(1-zq^{2n+2})\nu_{n+1}(z,q)\\
&=\big((1-q)-z(1+q^2)q^{2n}\big)\nu_n(z,q)\\
&=\big((1+q^2)(1-zq^{2n})-q(1+q)\big)\nu_n(z,q).
\end{align*}
Summing over $n\ge 0$ yields the telescoping identity
\begin{equation}\label{eq FH-relation}
(1+q^2)(1-z)H(z,q)-q(1+q)F(z,q)=1-z.
\end{equation}
Now multiplying the desired identity by $z$ and using~\eqref{eq zA-closed} and~\eqref{eq zB-closed}, we find
\begin{align*}
z\big((1+q^2)\mathcal B(z,q)-(1+q)\mathcal A(z,q)\big)
&=\frac{1+q^2}{1+q}(1-z)H(z,q)-qF(z,q)-\frac{1}{1+q} \\
&\quad +z\left(\frac{1+q^2}{(1+q)(1-q^3)}-\frac{q^2(1+q^2)}{1-q^3}\right).
\end{align*}
By~\eqref{eq FH-relation}, the first two terms equal $(1-z)/(1+q)$, and hence
\begin{align*}
z\big((1+q^2)\mathcal B(z,q)-(1+q)\mathcal A(z,q)\big)
&=-\frac{z}{1+q}+z\left(\frac{1+q^2}{(1+q)(1-q^3)}-\frac{q^2(1+q^2)}{1-q^3}\right)\\
&=-\frac{zq^4}{1-q^3}.
\end{align*}
Cancelling the common factor $z$ proves~\eqref{eq master bivariate} and completes the proof of the Main Theorem.
\end{proof}

\section{A Combinatorial Proof For the Main Theorem}\label{combinatorialproofs}

In this section, we give bijective proofs for \eqref{eq zA-closed} and \eqref{eq zB-closed}. We also give an involutive proof of \eqref{eq master bivariate}. 

\subsection{A Bijective Proof for \eqref{eq zA-closed}} Recall \eqref{eq zA-closed} states that
\begin{equation*}
z\mathcal{A}(z;q)=\frac{q}{1+q}F(z,q)-\frac{q}{1-q^2},
\end{equation*}
which is equivalent to
\begin{equation}\label{eq zA-closed Rewrite}
(1+q)z\mathcal{A}(z,q)=qF(z,q)-\frac{q}{1-q}.
\end{equation}
The left hand side is clearly the bivariate generating function for pairs of partitions $(\lambda,\mu)$, where $\lambda$ is either $(1)$ or the empty partition, $\mu$ is a partition counted by $a_2(n;r)$, $q$ keeps track of the total weight of $\lambda$ and $\mu$, and $z$ keeps track of the number of parts greater than $1$ in $\mu$. So, we first give a partition interpretation for the right-hand side.

Let $f(n;r)$ be the number of pairs of partitions $(\lambda,\mu)$ such that $\lambda=(1^{l})$ for some $l>0$, $\mu$ is a partition with parts bounded by $2l-2$ and odd parts are distinct, the total weight of $\lambda$ and $\mu$ is $n$, and the length of $\mu$ is $r$. It is clear that
$$\sum_{n\geq0}\sum_{r\geq0}f(n;r)z^rq^n=\sum_{l\geq0}\frac{(-zq;q^2)_l}{(zq^2;q^2)_l}q^{l+1}=qF(z,q).$$
The term $1/(1-q)$ is the case when $\mu$ is empty. So, the right-hand side of \eqref{eq zA-closed Rewrite} is the generating function for those pairs counted by $f(n;r)$ with $\mu\neq\emptyset$. Now, we are ready to give the bijection.
\begin{proof}[A bijective proof for \eqref{eq zA-closed Rewrite}] Let $(\lambda,\mu)$ be a pair of partitions counted by $f(n;r)$ with $\mu\neq\emptyset$. We consider the following two cases.

\textbf{Case 1.} The smallest part of $\mu$ is even. Note that since $\mu$ is non-empty, the smallest part must be $2k$ with $k>0$. Now, we keep only one copy of $1$ in $\lambda$ and move all the other $1$'s to $\mu$. Let $(\lambda',\mu')$ be the resulting pair. Suppose the length of $\lambda$ is $l$. Note that the parts in $\mu$ are in the interval $[2k,2l-2]$. Now the number of $1$'s in $\mu'$ is $l-1$, so $\mu'$ is counted by $a_2(n;r)$. In this case, $\lambda'=(1)$.

\textbf{Case 2.} The smallest part of $\mu$ is odd. Suppose it is $2k-1$ with $k>0$, since the odd parts in $\mu$ are distinct, we can add one of those $1$'s from $\lambda$ to the smallest part of $\mu$ and move the other $1$'s to $\mu$. Let the resulting pair be $(\lambda',\mu')$. Now, assume the length of $\lambda$ is $l$, then the parts in $\mu$ are in the interval $[2k-1,2l-2]$. So, the smallest even part in $\mu'$ will be $2k$, the number of $1$'s in $\mu'$ will be $l-1$, and the other parts in $\mu'$ are in the interval $[2k,2l-2]$. Thus, $\mu'$ is counted by $a_2(n;r)$, and in this case $\lambda'$ is empty.

By $\lambda'$, we can easily distinguish these two cases, and this map can be easily inverted. So, we finish the proof.
\end{proof}

\subsection{A Bijective Proof for \eqref{eq zB-closed}} Recall \eqref{eq zB-closed} states that
\begin{equation*}
z\mathcal B(z,q)=\frac{1-z}{1+q}H(z,q)-\frac{1}{1-q^2}
+\frac{z}{(1+q)(1-q^3)}-\frac{zq^2}{1-q^3},   
\end{equation*}
which is equivalent to
\begin{equation*}
(1+q)z\mathcal{B}(z,q)=(1-z)H(z,q)-\frac{1}{1-q}+\frac{z}{1-q^3}-\frac{zq^2(1+q)}{1-q^3}.
\end{equation*}
Now, for the right-hand side,
\begin{align*}
&(1-z)H(z,q)-\frac{1}{1-q}+\frac{z}{1-q^3}-\frac{zq^2(1+q)}{1-q^3}\\
=&(1-z)\left(1+\sum_{n\geq1}\frac{(-zq;q^2)_n}{(z;q^2)_n}q^n\right)-\frac{1}{1-q}+\frac{z-zq^2-zq^3}{1-q^3}\\
=&1-z+\sum_{n\geq1}\frac{(-zq;q^2)_n}{(zq^2;q^2)_{n-1}}q^n-\frac{1}{1-q}+z-\frac{zq^2}{1-q^3}\\
=&1+\sum_{n\geq1}\frac{(-zq;q^2)_n}{(zq^2;q^2)_{n-1}}q^n-\frac{1}{1-q}-\frac{zq^2}{1-q^3}.
\end{align*}
So, \eqref{eq zB-closed} can be rewritten as
\begin{equation}\label{eq zB Closed Rewrite}
(1+q)z\mathcal{B}(z,q)=1+\sum_{n\geq1}\frac{(-zq;q^2)_n}{(zq^2;q^2)_{n-1}}q^n-\frac{1}{1-q}-\frac{zq^2}{1-q^3},   
\end{equation}
and we shall give a bijective proof based on this form. The left-hand side is the generating function for pairs of partitions $(\lambda,\mu)$ such that $\lambda$ is either $(1)$ or empty, $\mu$ is a partition counted by $b_2(n;r)$, $q$ keeps track of the total weight of $\lambda$ and $\mu$, and $z$ keeps track of the number of parts of $\mu$ greater than $1$.

Next, we give an interpretation for the right-hand side of \eqref{eq zB Closed Rewrite}. Let $h(n;r)$ be the number of pairs of partitions $(\lambda,\mu)$ such that $\lambda=(1^l)$ for some $l\geq0$, $\mu$ is a partition with parts bounded by $2l-1$ and odd parts are distinct, the total weight of $\lambda$ and $\mu$ is $n$, and the length of $\mu$ is $r$. Then,
$$\sum_{n\geq0}\sum_{r\geq0}h(n;r)z^rq^{n}=1+\sum_{l\geq1}\frac{(-zq;q^2)_l}{(zq^2;q^2)_{l-1}}q^{l}.$$
Note that $1/(1-q)$ is generating all such pairs with $\mu=\emptyset$, while $zq^2/(1-q^3)$ generates all the pairs $((1^l),(2l-1))$ for $l\geq1$. So, the right-hand side of \eqref{eq zB Closed Rewrite} is the generating function for all pairs counted by $h(n;r)$ with $\mu\neq\emptyset$ and $(\lambda,\mu)\neq((1^l),(2l-1))$. Now, we present the bijection.

\begin{proof}[A bijective proof for \eqref{eq zB Closed Rewrite}] Let $(\lambda,\mu)$ be a pair of partitions counted by $h(n;r)$ and satisfy the extra restrictions of the right-hand side. Since, $\mu\neq\emptyset$, this suggests that $\lambda\neq\emptyset$ as well. Next, we consider the following two cases.

\textbf{Case 1.} The smallest part of $\mu$ is even. Assume it is $2k$ with $k>0$, and suppose the length of $\lambda$ is $l$, then the parts of $\mu$ lie in the interval $[2k,2l-1]$. Now, we move $l-1$ ones from $\lambda$ to $\mu$, and let the resulting pair be $(\lambda',\mu')$. Then, the number of $1$'s in $\mu'$ is $l-1$, and other parts in $\mu'$ are in the interval $[2k,2l-1]$. Also, in $\mu'$, the odd parts greater than $1$ are distinct. So, $\mu'$ is counted by $b_2(n;r)$, and $\lambda'=(1)$ in this case.

\textbf{Case 2.} The smallest part of $\mu$ is odd. Assume it is $2k-1$, and suppose the length of $\lambda$ is $l$, then $k$ must be strictly less than $l$. Otherwise, the pair we have would be $((1^l),(2l-1))$, which is excluded form the right-hand side. Now, we add one of the $1$'s in $\lambda$ to the smallest part of $\mu$, then move the rest $1$'s to $\mu$. Let the resulting pair be $(\lambda',\mu')$. Then, the smallest part of $\mu'$ is $2k$, which is smaller than $2l-1$. So, parts greater than $1$ in $\mu'$ are in the interval $[2k,2l-1]$, while the number of $1$'s in $\mu'$ is $l-1$. Thus, $\mu'$ is counted by $b_2(n;r)$ and $\lambda'=\emptyset$ in this case.

By $\lambda'$ we can distinguish the two cases, then the inverse of this map is straightforward. So, we finish the proof. 
\end{proof}

\subsection{Involutive proof of \eqref{eq master bivariate}} 
We prove \eqref{eq master bivariate} using a sign-reversing involution. The point of the proof is that the two partition families counted by
\(\mathcal A(z,q)\) and \(\mathcal B(z,q)\) are almost the same.  The only
difference is that the \(\mathcal{B}\)-family allows one additional largest odd part than the $\mathcal{A}$-family.
After the two sides are modified by the factors \(1+q^2\) and \(1+q\), this
extra freedom is cancelled by a sign-reversing involution.  The only objects
which cannot be paired form a simple boundary family, and this family gives
the correction term on the right-hand side of (1.6).

We first recall the combinatorial meaning of the two generating functions.
An object counted by \(\mathcal A(z,q)\) and $\mathcal B(z,q)$ is determined by integers $l\ge2, 1\le k\le l-1$, together with a choice of optional parts. They have the following structure:
\begin{itemize}
    \item contains the compulsory part \(2k\);
    \item contains \(l-1\) compulsory parts equal to \(1\);
    \item may contain optional even parts from $2k,\,2k+2,\,2k+4,\ldots,2l-2$, with repetitions allowed;
    \item may contain optional odd parts from $2k+1,\,2k+3,\ldots,2l-3+2i$, each at most once, where $i$ is $0$ or $1$, according as partitions are enumerated by $\mathcal{A}(z,q)$ or $\mathcal{B}(z,q)$, respectively.
\end{itemize} 
The exponent of \(q\) records the size of the partition, while the exponent
of \(z\) records the number of optional parts greater than \(1\).  Thus every
optional part contributes one factor of \(z\), in addition to its usual weight, accounted for by the exponent of $q$. It is convenient to put $m=l-k$. Then \(m\ge1\), and the largest optional even part for both sets is
\[
E_m=2l-2=2k+2m-2,
\]
while the largest odd part available only in the \(\mathcal{B}\)-family is
\[
O_m=2l-1=2k+2m-1.
\]
Thus \(O_m=E_m+1\). We now interpret the left-hand side of (1.6) as a signed generating function.  The positive objects are the \(\mathcal B\)-objects,
together with an auxiliary marker of weight either \(1\) or \(q^2\). These
are counted by $(1+q^2)\mathcal B(z,q)$. The negative objects are the \(\mathcal A\)-objects, together with an auxiliary marker
of weight either \(1\) or \(q\). These are counted by $(1+q)\mathcal A(z,q)$, and are assigned negative signs.

We shall define a weight-preserving, sign-reversing involution on all augmented
objects except for one boundary family. Here is a flow-chart on how the involution pairs off partitions on both the sets:
\[
\begin{array}{rclcl}
\overline{\mathcal B}(1,\;O_m\notin\lambda)
&\longleftrightarrow&
\overline{\mathcal A}(1)
&\qquad&
\text{common cancellation},\\[2mm]

\overline{\mathcal A}(q,\;E_m\in\lambda)
&\longleftrightarrow&
\overline{\mathcal B}(1,\;O_m\in\lambda)
&\qquad&
q+E_m\leftrightarrow O_m,\\[2mm]

\overline{\mathcal A}(q,\;E_m\notin\lambda,\;m\ge2)
&\longleftrightarrow&
\overline{\mathcal B}(q^2,\;m-1)
&\qquad&
m\mapsto m-1,\\[2mm]

\overline{\mathcal A}(q,\;E_1\notin\lambda,\;m=1)
&\longmapsto&
\text{unpaired}
&\qquad&
(2k,1^k)\text{ with marker }q
\end{array}
\]
where we used the notations $\overline{\mathcal A}(\text{marker},\text{conditions})$ and $\overline{\mathcal B}(\text{marker},\text{conditions})$ to represent a partition enumerated by $(1+q)\mathcal A(z,q)$ and $(1+q^2)\mathcal B(z,q)$ with the markers and conditions on the parts. We omit the conditions from the notation when there are no conditions on the parts. We now describe the involution in detail. 

First, there is an immediate cancellation.  If a positive \(\mathcal B\)-object carries the marker of weight \(1\) and does not contain the extra top odd part \(O_m=2l-1\), then it is already
an \(\mathcal A\)-object with the same parameters and the same optional parts.  We pair
it with the corresponding negative \(\mathcal A\)-object carrying the marker of weight
\(1\).  This pairing is clearly sign-reversing and weight-preserving. It remains to pair three types of objects:

\[
\begin{array}{ll}
\text{negative:}&\mathcal A\text{-objects with marker }q\\
\text{positive:} & \mathcal B\text{-objects with marker }q^2,\\
\text{positive:} & \mathcal B\text{-objects with marker }1\text{ containing }O_m.
\end{array}
\]
Take a negative \(\mathcal A\)-object with marker \(q\), and fix its parameters \(k\) and \(m\).  There are two possibilities.

\noindent \textbf{Case 1} ($E_m$ is in $\mathcal A$): Suppose first that the object contains at least one copy of the largest
optional even part $E_m$. Then we remove the marker \(q\) and remove one copy of \(E_m\).  In their place we insert the top odd part $O_m$. The resulting object is a positive \(\mathcal B\)-object with marker \(1\). The weight is unchanged, because
\[
q\cdot zq^{E_m}=zq^{E_m+1}=zq^{O_m}.
\]
The operation is reversible: starting with a positive \(\mathcal B\)-object with marker
\(1\) which contains \(O_m\), we remove \(O_m\), insert one copy of \(E_m\), and restore the negative marker \(q\).  Thus these two classes cancel
perfectly.

\noindent \textbf{Case 2} ($E_m$ is not in $\mathcal A$): Now suppose that the negative \(\mathcal A\)-object with marker \(q\) contains no copy
of its largest optional even part \(E_m\).  If \(m\ge2\), then the same
optional parts are still legal after lowering the interval length from \(m\)
to \(m-1\).  Indeed, the even parts now lie among
\[
2k,2k+2,\ldots,2k+2m-4,
\]
and the odd parts lie among
\[
2k+1,2k+3,\ldots,2k+2m-3.
\]
But these are precisely the optional even and odd parts allowed for a
\(\mathcal B\)-object with parameters \(k\) and \(m-1\). We need to check if the weight of the partition coming from the compulsory parts changes. The compulsory contribution for the $\mathcal A$ object is $q^{2k+l-1}=q^{3k+m-1}$, and the marker contributes an additional factor \(q\).  Hence the compulsory marker contribution is $q^{3k+m}$. After lowering \(m\) to \(m-1\), the compulsory contribution becomes $q^{3k+(m-1)-1}=q^{3k+m-2}$, and the $q^2$ marker in $\mathcal B$ contributes \(q^2\).  Thus the new compulsory-marker contribution is again $q^{3k+m-2}q^2=q^{3k+m}$. Since the optional parts were not changed, the total weight is unchanged. This operation is also sign-reversing and reversible. 
Indeed, starting with a positive \(\mathcal B\)-object
with marker \(q^2\) and parameters \(k, m-1\) ($m\ge 2$), we raise the interval length to
\(m\), keep the optional parts unchanged, and replace the marker \(q^2\) by
the negative marker \(q\).  The resulting \(\mathcal A\)-object has no copy of the new
largest optional even part \(E_m\), exactly as required.

We have therefore paired all objects except those for which the second
operation cannot be performed.  This happens precisely when \(m=1\).  In that
case the largest optional even part is \(E_1=2k\), and if the negative \(\mathcal A\)-object contains no copy of \(E_1\), then it contains no optional parts at all (since the smallest odd part for $\mathcal A$ is $2k+1$).  Since \(m=1\), we have \(l=k+1\).  The unpaired $\mathcal{A}$ object with negative marker $q$ is therefore
the partition
\[
(2k,\underbrace{1,1,\cdots,1}_{k-\text{ones}}),
\]
Its weight is $q^{2k+k}q=q^{3k+1}$. Consequently the total contribution of the unpaired objects is
\[
-\sum_{k\ge1}q^{3k+1}
=
-\frac{q^4}{1-q^3}.
\]
All the remaining objects cancel in pairs under the weight-preserving
sign-reversing involution.  Hence the signed generating function is exactly
the contribution of the unpaired boundary family, and we obtain
\[
(1+q^2)\mathcal B(z,q)-(1+q)\mathcal A(z,q)
=
-\frac{q^4}{1-q^3}.
\]
This proves \eqref{eq master bivariate}.\qed

\section{Proofs of Theorems~\ref{thm main-1} and~\ref{thm main-2}}\label{sec proofs main theorems}
We first prove Theorem~\ref{thm main-1} from the bivariate formula.  We then give an independent direct proof based on Heine's transformation and the Rogers--Fine identity.  Finally, we prove Theorem~\ref{thm main-2}.

\subsection{Proof of Theorem~\ref{thm main-1} from the master identity}\label{sec proof main-1}
Set $z=-1$ in~\eqref{eq zA-closed}. Since
\[
F(-1,q)=\sum_{n\geq 0}\fr{(q;q^2)_n q^n}{(-q^2;q^2)_n},
\]
we obtain
\begin{equation}\label{eq A-from-F}
A(q)=\fr{q}{1-q^2}-\fr{q}{1+q}F(-1,q).
\end{equation}
Thus it remains only to evaluate this single basic hypergeometric series.

Recall the Rogers--Fine identity~\cite[p.223, Eq. (9.1.1)]{Andrews-Berndt 1}
\begin{equation}\label{Rog-Fin}
\sum_{n\geq 0}\fr{(\alpha;q)_n}{(\beta;q)_n}\tau^n
=\sum_{n\geq 0}\fr{(\alpha;q)_n (\alpha\tau q/\beta;q)_n (1-\alpha\tau q^{2n})\beta^n \tau^n q^{n^2-n}}{(\beta;q)_n (\tau ;q)_{n+1}}.
\end{equation}
Applying~\eqref{Rog-Fin} with $q \to q^2$ and $(\alpha,\beta,\tau)=(q,-q^2,q)$, then simplifying, gives
\begin{align*}
F(-1,q)
&=\sum_{n\geq 0}\fr{(q;q^2)_n q^n}{(-q^2;q^2)_n}\\
&=\sum_{n\geq 0}\fr{(q;q^2)_n (-q^2;q^2)_n (1-q^{4n+2})(-1)^n q^{2n^2+n}}{(-q^2;q^2)_n (q;q^2)_{n+1}} \\
&=\sum_{n\geq 0}(-1)^n q^{2n^2+n}(1+q^{2n+1}) \\
&=\sum_{n\geq 0}\Big( q^{8n^2+2n}(1+q^{4n+1})-q^{8n^2+10n+3}(1+q^{4n+3}) \Big) \\
&= \sum_{n\geq 0} \Big( q^{\binom{4n+1}{2}}+q^{\binom{4n+2}{2}}-q^{\binom{4n+3}{2}}-q^{\binom{4n+4}{2}} \Big).
\end{align*}
Hence
\begin{equation}\label{eq a-false-theta}
F(-1,q)=-\sum_{n\geq 0} \chi(n) q^{\binom{n+1}{2}}.
\end{equation}
Substituting~\eqref{eq a-false-theta} into~\eqref{eq A-from-F} yields
\[
A(q)=\fr{q}{1-q^2}+\fr{1}{1+q}\sum_{n\geq 0}\chi(n)q^{\binom{n+1}{2}+1},
\]
which is exactly~\eqref{conj1-id}. This proves Theorem~\ref{thm main-1}.

\subsection{A direct Heine--Rogers--Fine proof of Theorem~\ref{thm main-1}}\label{sec direct proof main-1}
For completeness, and also to record the complementary method, we give a direct proof of Theorem~\ref{thm main-1} which does not use the bivariate master identity.  Starting from the defining double series and writing $r=l-k$, we have
\begin{equation}\label{eq direct A start}
A(q)=\sum_{k,r\ge 1}\frac{(q^{2k+1};q^2)_{r-1}}{(-q^{2k};q^2)_r}q^{3k+r-1}.
\end{equation}
Putting $j=r-1$ gives
\begin{equation}\label{eq direct A Ik}
A(q)=\sum_{k\ge1}\frac{q^{3k}}{1+q^{2k}}I_k(q),\qquad
I_k(q):=\sum_{j\ge0}\frac{(q^{2k+1};q^2)_j}{(-q^{2k+2};q^2)_j}q^j.
\end{equation}
We regard $I_k(q)$ as
\[
{}_{2}\phi_{1}\left(\begin{matrix}q^{2k+1},q^2\\-q^{2k+2}\end{matrix};q^2,q\right).
\]
We use Heine's transformation in the form
\[
{}_{2}\phi_{1}\left(\begin{matrix}a,b\\c\end{matrix};Q,z\right)
=\frac{(abz/c;Q)_\infty}{(z;Q)_\infty}
{}_{2}\phi_{1}\left(\begin{matrix}c/a,c/b\\c\end{matrix};Q,abz/c\right),
\]
where
\[
{}_{2}\phi_{1}\left(\begin{matrix}a,b\\c\end{matrix};Q,z\right)
:=\sum_{n\ge0}\frac{(a,b;Q)_n}{(c,Q;Q)_n}z^n.
\]
With base $q^2$ and parameters
\[
a=q^{2k+1},\qquad b=q^2,\qquad c=-q^{2k+2},\qquad z=q,
\]
yields
\begin{align}
I_k(q)
&=\frac{(-q^2;q^2)_\infty}{(q;q^2)_\infty}
\sum_{n\ge0}\frac{(-q;q^2)_n(-q^{2k};q^2)_n}{(q^2;q^2)_n(-q^{2k+2};q^2)_n}(-q^2)^n \notag\\
&=\frac{(-q^2;q^2)_\infty}{(q;q^2)_\infty}
\sum_{n\ge0}\frac{1+q^{2k}}{1+q^{2k+2n}}\frac{(-q;q^2)_n}{(q^2;q^2)_n}(-q^2)^n . \label{eq direct Ik heine}
\end{align}
Substituting~\eqref{eq direct Ik heine} into~\eqref{eq direct A Ik}, expanding
$(1+q^{2k+2n})^{-1}$ as a geometric series, and interchanging sums gives
\begin{align}
A(q)
&=\frac{(-q^2;q^2)_\infty}{(q;q^2)_\infty}
\sum_{h\ge0}(-1)^h\frac{q^{2h+3}}{1-q^{2h+3}}
\sum_{n\ge0}\frac{(-q;q^2)_n}{(q^2;q^2)_n}(-q^{2h+2})^n. \label{eq direct geom}
\end{align}
The $q$-binomial theorem evaluates the inner sum as
\[
\sum_{n\ge0}\frac{(-q;q^2)_n}{(q^2;q^2)_n}(-q^{2h+2})^n
=\frac{(q^{2h+3};q^2)_\infty}{(-q^{2h+2};q^2)_\infty}.
\]
Hence
\begin{align}
A(q)
&=\sum_{h\ge0}(-1)^h
\frac{q^{2h+3}(-q^2;q^2)_h}{(1-q^{2h+3})(q;q^2)_{h+1}} \notag\\
&=\frac{q^3}{(1-q)(1-q^3)}
\sum_{h\ge0}\frac{(-q^2;q^2)_h(-q^2)^h}{(q^5;q^2)_h}. \label{eq direct before RF}
\end{align}
Applying the Rogers--Fine identity~\eqref{Rog-Fin} with base $q^2$ and
\[
\alpha=-q^2,\qquad \beta=q^5,\qquad \tau=-q^2,
\]
we obtain from~\eqref{eq direct before RF}
\begin{equation}\label{eq direct after RF}
A(q)=q^3\sum_{n\ge0}(-1)^n q^{2n^2+5n}
\frac{1-q^{2n+2}}{(1-q^{2n+1})(1-q^{2n+3})}.
\end{equation}
Using
\[
\frac{1-q^{2n+2}}{(1-q^{2n+1})(1-q^{2n+3})}
=\frac{1}{(1+q)(1-q^{2n+1})}+\frac{q}{(1+q)(1-q^{2n+3})},
\]
then shifting the second sum in the resulting expression by $n\mapsto n-1$, gives
\begin{align*}
A(q)
&=\frac{1}{1+q}\left(\frac{q^3}{1-q}
-\sum_{n\ge1}(-1)^nq^{2n^2+n+1}(1+q^{2n+1})\right)\\
&=\frac{q}{1-q^2}-\frac{q}{1+q}
\sum_{n\ge0}(-1)^nq^{2n^2+n}(1+q^{2n+1}).
\end{align*}
Finally,
\[
\sum_{s\ge0}\chi(s)q^{\binom{s+1}{2}}
=-\sum_{n\ge0}(-1)^nq^{2n^2+n}(1+q^{2n+1}),
\]
which again gives
\[
A(q)=\frac{q}{1-q^2}+\frac{1}{1+q}\sum_{s\ge0}\chi(s)q^{\binom{s+1}{2}+1}.
\]
This is Theorem~\ref{thm main-1}.

\subsection{Proof of Theorem~\ref{thm main-2}}\label{sec proof main-2}
The proof separates the derivation of the generating function from the coefficient estimate needed for the range assertion.  We first isolate the false theta contribution.

\begin{lemma}\label{lem false theta block}
Let
\[
C(q):=\frac{q}{1+q^2}\sum_{n\ge 0}\chi(n)q^{\binom{n+1}{2}}=\sum_{n\ge 0} c(n)q^n.
\]
Also let $T_r:=\binom{r+1}{2}$ for $r\ge 0$.  Fix $m\ge 0$ and $u\in\{1,2,\dots,8\}$.  If
\[
T_{8m+u-1}<n\le T_{8m+u},
\]
then
\begin{equation}\label{eq exact c(n)}
c(n)=\mu_u(n)\,\chi(n-1),
\end{equation}
where $\mu_u(n)$ is defined in~\eqref{eq mu-definition}. In particular, $c(n)=0$ whenever $T_{8m+7}<n\le T_{8m+8}$.
\end{lemma}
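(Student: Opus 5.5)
We expand $C(q)$ directly as a convolution. Write
\[
\frac{1}{1+q^2}=\frac{1-q^2}{1-q^4}=\sum_{j\ge0}e(j)q^j,
\qquad e(j)=\begin{cases}1,& j\equiv0\pmod4,\\ -1,& j\equiv 2\pmod 4,\\ 0,& j \text{ odd},\end{cases}
\]
and extend $e$ to all integers by $4$-periodicity. Then for $n\ge1$,
\[
c(n)=\sum_{s\ge0,\ T_s\le n-1}\chi(s)\,e(n-1-T_s).
\]
If $T_{8m+u-1}<n\le T_{8m+u}$, then $T_s\le n-1$ holds exactly for $0\le s\le 8m+u-1$. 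So $c(n)$ is a finite sum over $s\in\{0,1,\dots,8m+u-1\}$, and each summand depends only on $n \bmod 4$, on $\chi(s)$ (which is $4$-periodic in $s$), and on $T_s \bmod 4$.

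The key observation is that $T_s \bmod 4$ is $8$-periodic in $s$, taking the values
\[
0,1,3,2,2,3,1,0 \qquad (s=0,\dots,7),
\]
while $\chi(s)$ takes the values $-1,-1,1,1,-1,-1,1,1$.

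\textbf{Full blocks cancel.} Consider a complete block $s=8t,\dots,8t+7$. We group its terms according to the residue of $T_s$ modulo $4$; the pairs are $(s=0,7)$, $(1,6)$, $(2,5)$, $(3,4)$ within the block. In each pair the two values of $T_s$ are congruent mod $4$ and the two values of $\chi$ are opposite. Hence
\[
\sum_{s=8t}^{8t+7}\chi(s)\,e(x-T_s)=0 \quad\text{for every integer } x.
\]
Applying this to the $m$ complete blocks $s<8m$, we get
\[
c(n)=\sum_{i=0}^{u-1}\chi(i)\,e(n-1-T_i),
\]
using $T_{8m+i}\equiv T_i\pmod 4$ and $\chi(8m+i)=\chi(i)$.

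\textbf{Finite check.} The right-hand side now depends only on $u\in\{1,\dots,8\}$ and $n \bmod 4$, and so does $\mu_u(n)\chi(n-1)$. It therefore remains to verify $32$ cases. It is efficient to do this by partial sums. For $n-1$ even, compute the cumulative contributions of $i=0,1,2,\dots$, noting that only those $i$ with $T_i\equiv n-1 \pmod 2$ contribute. Compare the result with
\[
\mu_u(n)\chi(n-1),\qquad \mu_u(n)\in\{0,1,2\}.
\]
For example, $u=1$ gives $c(n)=-e(n-1)$. This equals $\chi(n-1)$ when $n$ is odd and $0$ when $n$ is even, matching $\mu_1(n)=(1-(-1)^n)/2$.

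The case $u=8$ is exactly a full block, so $c(n)=0$ there, which gives the final assertion of the lemma. The symmetry of the pairing above also explains why $\mu_u=\mu_{8-u}$, and this halves the checking.

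The main obstacle is only bookkeeping: correctly tracking the parity condition $T_i\equiv n-1\pmod 2$ together with the sign $e$ in each of the $32$ cases. The structural input, the cancellation of full blocks of length $8$, is immediate from the table above.
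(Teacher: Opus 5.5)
Your proof is correct, but it is organized differently from the paper's. The paper pairs index $j$ with $j+4$ (same $\chi$, and $T_{j+4}-T_j=4j+10$), so that $1+q^2$ divides each pair exactly. This turns $C(q)$ into a sum of polynomials $t(m)$, one per block of eight indices, with pairwise disjoint supports in $(T_{8m},T_{8m+7})$. Each $t(m)$ consists of four alternating strings $\chi(8m+s)q^{T_{8m+s}+1}(1-q^2+\cdots)$, $0\le s\le 3$. The paper then reads off the coefficient by observing that each string covering $n$ with the right parity contributes exactly $\chi(n-1)$.

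You instead keep $1/(1+q^2)$ as the infinite $4$-periodic series $e$, and you pair $s$ with $7-s$ (same $T_s \bmod 4$, opposite $\chi$). This annihilates every complete earlier block of the convolution and gives the exact formula $c(n)=\sum_{i=0}^{u-1}\chi(i)e(n-1-T_i)$, valid uniformly on the whole interval $T_{8m+u-1}<n\le T_{8m+u}$.

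The gain is that no endpoint analysis is needed. The paper must argue separately that at $n=T_{8m+u}$ the string with index $s=u-4$ has already ended, but has the wrong parity there anyway. That remark is in fact needed for $u=4$ as well as $u=5,6,7$. Your formula absorbs this automatically, and your pairing also gives $\mu_u=\mu_{8-u}$ for free. The paper's route, in turn, makes the support structure and the uniform sign $\chi(n-1)$ visible without any table.

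The $32$ cases you left unexecuted do all check out, and they reduce to the paper's sign observation. Write $S_u(x)=\sum_{i<u}\chi(i)e(x-T_i)$. For $i=0,1,2,3$ one has $\chi(i)e(x-T_i)=\chi(x)$ if $x\equiv T_i\pmod 2$ and $0$ otherwise. So for $u\le 4$, $S_u(n-1)$ is $\chi(n-1)$ times the number of $i<u$ lying in $\{0,3\}$ (for $n$ odd) or in $\{1,2\}$ (for $n$ even). Your pairing gives $\chi(7-i)e(x-T_{7-i})=-\chi(i)e(x-T_i)$, hence $S_u=S_{8-u}$, which handles $u\ge 4$. The resulting count is exactly $\mu_u(n)$.
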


\begin{proof}
For $m\ge 0$, set
\[
t(m):=\frac{1}{1+q^2}\sum_{j=8m}^{8m+7}\chi(j)q^{\binom{j+1}{2}+1}.
\]
Then $C(q)=\sum_{m\ge 0} t(m)$.  Pairing the terms with indices $j$ and $j+4$ gives
\begin{equation}\label{id lem main-2}
t(m)=\sum_{j=8m}^{8m+3}\chi(j)q^{\binom{j+1}{2}+1}(1-q^2+q^4-\cdots+q^{4j+8}).
\end{equation}
Indeed, since $\chi(j+4)=\chi(j)$ and
\[
\binom{j+5}{2}-\binom{j+1}{2}=4j+10,
\]
we have
\[
\frac{q^{\binom{j+1}{2}+1}+q^{\binom{j+5}{2}+1}}{1+q^2}
=q^{\binom{j+1}{2}+1}(1-q^2+q^4-\cdots+q^{4j+8}).
\]
Thus
\begin{align*}
t(m)
&= -q^{T_{8m}+1}(1-q^2+q^4-\cdots + q^{32m+8}) \\
&\quad -q^{T_{8m+1}+1}(1-q^2+q^4-\cdots + q^{32m+12}) \\
&\quad +q^{T_{8m+2}+1}(1-q^2+q^4-\cdots + q^{32m+16}) \\
&\quad +q^{T_{8m+3}+1}(1-q^2+q^4-\cdots + q^{32m+20}).
\end{align*}
The lowest power in $t(m)$ is $T_{8m}+1$ and the highest power is $T_{8m+7}-1=\binom{8m+8}{2}-1$, so the supports of the polynomials $t(m)$ are pairwise disjoint. Hence $c(n)=[q^n]t(m)$ whenever $T_{8m}<n\le T_{8m+8}$.

Now assume $T_{8m+u-1}<n\le T_{8m+u}$.  The summand with index $s\in\{0,1,2,3\}$ has sign $\chi(8m+s)$ and contributes only to exponents
\[
T_{8m+s}<n<T_{8m+s+4},\qquad
n\equiv T_{8m+s}+1\pmod 2.
\]
Thus, before imposing the parity condition, the possible indices $s$ are
\[
s\in
\begin{cases}
\{0\}, & u=1,\\
\{0,1\}, & u=2,\\
\{0,1,2\}, & u=3,\\
\{0,1,2,3\}, & u=4,\\
\{1,2,3\}, & u=5,\\
\{2,3\}, & u=6,\\
\{3\}, & u=7,\\
\varnothing, & u=8.
\end{cases}
\]
At the upper endpoints for the lines $u=5,6,7$, the extra boundary index listed above has the wrong parity and therefore contributes zero; thus the displayed list is harmless once the parity condition is imposed. Since $\chi(8m)=\chi(8m+1)=-1$ and $\chi(8m+2)=\chi(8m+3)=1$, each actual contribution has sign $\chi(n-1)$: the summands with $s=0,3$ contribute when $n$ is odd, while those with $s=1,2$ contribute when $n$ is even. Therefore $c(n)$ equals $\chi(n-1)$ multiplied by the number of possible indices whose parity matches $n$. Counting these indices gives exactly the factor $\mu_u(n)$ in~\eqref{eq exact c(n)}.
\end{proof}

\begin{proof}[Proof of Theorem~\ref{thm main-2}]
First we prove the generating-function formula. By~\eqref{eq weighted relation} and Theorem~\ref{thm main-1},
\begin{align*}
(1+q^2)B(q)
&=(1+q)A(q)-\frac{q^4}{1-q^3}\\
&=(1+q)\left(\frac{q}{1-q^2}+\frac{1}{1+q}\sum_{n\ge 0}\chi(n)q^{\binom{n+1}{2}+1}\right)-\frac{q^4}{1-q^3}\\
&=\frac{q}{1-q}+q\sum_{n\ge 0}\chi(n)q^{\binom{n+1}{2}}-\frac{q^4}{1-q^3}.
\end{align*}
Dividing by $1+q^2$ and simplifying the rational part,
\[
\frac{q}{(1-q)(1+q^2)}-\frac{q^4}{(1-q^3)(1+q^2)}
=\frac{q+q^2+q^3-q^4}{(1-q)(1+q^2)(1+q+q^2)},
\]
gives~\eqref{conj2-id}.

It remains to prove the range assertion.  Let
\[
R(q):= \frac{q+q^2+q^3-q^4}{(1-q)(1+q^2)(1+q+q^2)}.
\]
Then
\begin{align*}
R(q)
&=\frac{q+q^2-q^4+q^5+2q^6-q^7-q^8+2q^9+q^{10}-q^{11}}{1-q^{12}},
\end{align*}
so
\begin{equation}\label{coeff-R}
[q^N] R(q)
\in\begin{cases}
\{0,-1\} &\ \text{if\ } N\equiv 0,3\Mod{4}, \\
\{1,2\} & \ \text{if\ } N\equiv 1,2\Mod{4}.
\end{cases}
\end{equation}
On the other hand, Lemma~\ref{lem false theta block} implies
\begin{equation}\label{coeff-C}
[q^N] C(q)
\in\begin{cases}
\{0,1,2\} &\ \text{if\ } N\equiv 0,3\Mod{4}, \\
\{0,-1,-2\} &\ \text{if\ } N\equiv 1,2\Mod{4}.
\end{cases}
\end{equation}
Since $B(q)=R(q)+C(q)$, equations~\eqref{coeff-R} and~\eqref{coeff-C} give
\[
[q^N]B(q)\in
\begin{cases}
\{0,-1\}+\{0,1,2\}, & N\equiv 0,3\pmod 4,\\
\{1,2\}+\{0,-1,-2\}, & N\equiv 1,2\pmod 4,
\end{cases}
\]
and both displayed sumsets are contained in $\{-1,0,1,2\}$.  This completes the proof of Theorem~\ref{thm main-2}.
\end{proof}

\section{Further consequences of the master identity}\label{sec proofs corollaries}
We first prove the fixed-refinement specialization.
\begin{proof}[Proof of Corollary~\ref{cor fixed-r}]
Setting $z=0$ in~\eqref{eq zA-def} and~\eqref{eq zB-def} gives
\[
A_0(q)=B_0(q)=\sum_{\substack{l\ge 2\\1\le k\le l-1}} q^{2k+l-1}
=\sum_{k\ge 1}\sum_{m\ge 1} q^{3k+m-1}
=\frac{q^3}{(1-q)(1-q^3)}.
\]
Since
\[
\mathcal A(z,q)=\sum_{r\ge 0}A_r(q)z^r,\qquad \mathcal B(z,q)=\sum_{r\ge 0}B_r(q)z^r,
\]
extracting the coefficient of $z^r$ from~\eqref{eq master bivariate} yields~\eqref{eq fixed-r relation} for every $r\ge 1$, and~\eqref{eq fixed-r coeff relation} follows by comparing coefficients of $q^n$.
\end{proof}

The proof of Theorem~\ref{thm main-2} already determines the false theta contribution in triangular blocks.  Combining Lemma~\ref{lem false theta block} with the rational part proves the exact coefficient formula.

\begin{proof}[Proof of Corollary~\ref{cor exact formula for bpp}]
By Theorem~\ref{thm main-2},
\[
\sum_{n\ge 0} b_2''(n)q^n
=\frac{q+q^2+q^3-q^4}{(1-q)(1+q^2)(1+q+q^2)} + C(q).
\]
Also,
\[
\frac{q+q^2+q^3-q^4}{(1-q)(1+q^2)(1+q+q^2)}
=\frac{q+q^2-q^4+q^5+2q^6-q^7-q^8+2q^9+q^{10}-q^{11}}{1-q^{12}},
\]
so its coefficients are exactly $\rho(n)$.  The asserted formula now follows from Lemma~\ref{lem false theta block}.
\end{proof}

\section{A quantum modular consequence}\label{sec quantum}

The direct false theta form above also gives the quantum modular interpretation of the first signed generating function.  Let $q=e^{2\pi i\tau}$ and define
\[
\Theta(\tau):=\frac{q^{1/8}}{1-q}-q^{-7/8}(1+q)A(q).
\]
By Theorem~\ref{thm main-1}, or equivalently by the last displayed formula in the direct derivation,
\begin{equation}\label{eq theta false theta}
\Theta(\tau)=q^{1/8}F(-1,q),\qquad
F(-1,q)=\sum_{n\ge0}(-1)^nq^{2n^2+n}(1+q^{2n+1}).
\end{equation}
Let $\psi$ be the odd periodic function modulo $8$ defined by
\[
\psi(r)=
\begin{cases}
1, & r\equiv 1,3\pmod 8,\\
-1, & r\equiv 5,7\pmod 8,\\
0, & r\equiv 0,2,4,6\pmod 8.
\end{cases}
\]
Then
\begin{equation}\label{eq psi false theta}
F(-1,q)=\sum_{r\ge1}\psi(r)q^{(r^2-1)/8}.
\end{equation}
Indeed, the substitutions $r=4n+1$ and $r=4n+3$ in the two summands of $F(-1,q)$ give exactly the four nonzero residue classes of $\psi$ modulo $8$.


We now place \(\Theta\) in the framework of partial theta series with periodic
coefficients.  Let
\[
\vartheta_{\psi}(z)
:=
\sum_{r\ge0}\psi(r)e^{2\pi i z r^2/16}.
\]
This is the partial theta series \(\theta_f(z)\) of \cite{Goswami-Osburn} with
period \(M=8\) and coefficient function \(f=\psi\).  Since \(\psi\) is odd and
periodic modulo \(8\), their theorem implies that \(\vartheta_{\psi}(z)\) is a
strong quantum modular form (see \cite[pp 453]{Goswami-Osburn}) of weight \(1/2\) on \(\mathbb Q\), with respect to
the group $\Gamma_8=\Gamma_1(16)$. Moreover, $\Theta(\tau)=\vartheta_{\psi}(2\tau)$. Consequently, \(\Theta\) is also a strong quantum modular form of weight
\(1/2\) on \(\mathbb Q\), with respect to the pullback of \(\Gamma_1(16)\) under
the dilation \(\tau\mapsto 2\tau\), where for $N\ge 1$
\begin{equation*}
\Gamma_1(N):=\left\{\begin{pmatrix}
a&b\\c&d    
\end{pmatrix}\in SL_2(\mathbb{Z}): c\equiv 0\!\!\pmod{N},\,a\equiv d\equiv 1\!\!\pmod{N}\right\}.    
\end{equation*}
The associated weight \(3/2\) unary theta series, or shadow, is
\begin{equation}\label{eq:shadow}
\Theta_{\psi}(z)
:=
\sum_{r\in\mathbb Z}r\psi(r)e^{2\pi i z r^2/16}.
\end{equation}
In particular, the quantum modularity of \(\Theta(\tau)\) is governed by the
Eichler integral attached to the unary theta function
\(\Theta_{\psi}(z)\).  More explicitly, 
it is shown in \cite{Goswami-Osburn} that the obstruction function for
\(\vartheta_{\psi}\) is given by an Eichler integral of \(\Theta_{\psi}\). Define the non-holomorphic Eichler integral associated with
\(\Theta_{\psi}\)
\[
\widehat{\Theta}_{\psi}(\tau)
:=
\frac{1}{\sqrt{8i}}
\int_{-\overline{\tau}}^{i\infty}
\Theta_{\psi}(w)(w+\tau)^{-1/2}\,dw
\]
then we have
\begin{equation}\label{qm}
\vartheta_{\psi}(\alpha)
-
\left(\vartheta_{\psi}|_{1/2,\nu}\gamma\right)(\alpha)
=
r_{\gamma,\psi}(\alpha),
\end{equation}
where $\nu$ is a multiplier, $f|_{k,\chi}\gamma$ is the usual slash operator, and 
the modular obstruction is
\begin{equation}\label{eq:quantum-cocycle}
r_{\gamma,\psi}(\tau)
=
\frac{1}{\sqrt{8i}}
\int_{\gamma^{-1}(i\infty)}^{i\infty}
\Theta_{\psi}(w)(w-\tau)^{-1/2}\,dw,\quad \alpha\in\mathbb{Q}.
\end{equation}
The function \(r_{\gamma,\psi}\) is holomorphic on the lower half-plane,
extends smoothly to the real line, and is real-analytic away from the singular
point \(\gamma^{-1}(i\infty)\). 
\begin{theorem}\label{thm:quantum-modularity}
Let $D=\begin{pmatrix}2&0\\0&1\end{pmatrix}$ and set $\Gamma_\Theta=
D^{-1}\Gamma_1(16)D\cap SL_2(\mathbb Z)$. Then \(\Theta(\tau)\) is a strong
quantum modular form of weight \(1/2\) on \(\mathbb Q\) with respect to
\(\Gamma_\Theta\). More precisely, we have 
\begin{equation*}
\Theta(\alpha)-(\Theta|_{1/2,\nu_\Theta}\delta)(\alpha)=R_{\delta,\psi}(\alpha),\quad \delta\in\Gamma_\Theta,\quad\alpha\in\mathbb{Q}    
\end{equation*}
where $\nu_\Theta(\delta)=\nu_\psi(D\delta D^{-1})$ and $\,R_{\delta,\psi}=r_{D\delta D^{-1},\psi}(2\tau)$. The function \(R_{\delta,\psi}\) extends smoothly to the real line and is
real-analytic on $\mathbb R\setminus\{\delta^{-1}(i\infty)\}$.
\end{theorem}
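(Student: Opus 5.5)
The plan is to transport the Goswami--Osburn quantum modularity of $\vartheta_\psi$ through the dilation $\tau\mapsto 2\tau$. There are three steps.

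\textbf{Step 1: identify $\Theta$ with a dilated partial theta series.} By \eqref{eq theta false theta} and \eqref{eq psi false theta},
\[
\Theta(\tau)=\sum_{r\ge1}\psi(r)q^{r^2/8}=\sum_{r\ge0}\psi(r)e^{2\pi i (2\tau) r^2/16}=\vartheta_\psi(2\tau),
\]
where the $r=0$ term vanishes since $\psi(0)=0$. Writing $D\tau=2\tau$, this says $\Theta=\vartheta_\psi\circ D$. The identification also holds for $\tau=\alpha\in\mathbb Q$, where both sides are defined as radial limits. Because $\psi$ is odd, these limits exist at every rational point, which is part of the Goswami--Osburn statement.

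\textbf{Step 2: conjugate the group.} For $\delta=\begin{pmatrix}a&b\\c&d\end{pmatrix}\in\Gamma_\Theta$, set $\gamma:=D\delta D^{-1}=\begin{pmatrix}a&2b\\c/2&d\end{pmatrix}$. By the definition of $\Gamma_\Theta$, $\gamma\in\Gamma_1(16)$, so $c\equiv 0\pmod{32}$. We have $D(\delta\tau)=\gamma(D\tau)$ and $c\tau+d=(c/2)(2\tau)+d$. The automorphy factor of $\delta$ at $\tau$ therefore equals that of $\gamma$ at $2\tau$. Hence, with $\nu_\Theta(\delta):=\nu_\psi(\gamma)$, we get
\[
(\Theta|_{1/2,\nu_\Theta}\delta)(\tau)=(\vartheta_\psi|_{1/2,\nu_\psi}\gamma)(2\tau).
\]
The multiplier system property of $\nu_\Theta$ follows because $\delta\mapsto D\delta D^{-1}$ is a homomorphism. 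The only care needed is that the branch of $(c\tau+d)^{1/2}$ matches that of $((c/2)(2\tau)+d)^{1/2}$, which is immediate since the two quantities are literally equal.

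\textbf{Step 3: apply \eqref{qm} and transfer regularity.} Evaluating \eqref{qm} at $2\alpha\in\mathbb Q$ gives
\[
\Theta(\alpha)-(\Theta|\delta)(\alpha)=r_{\gamma,\psi}(2\alpha)=:R_{\delta,\psi}(\alpha).
\]
Smoothness of $R_{\delta,\psi}$ on $\mathbb R$ and real-analyticity away from a point follow from the corresponding properties of $r_{\gamma,\psi}$ composed with $x\mapsto 2x$. The singular point is $\tfrac12\gamma^{-1}(i\infty)=D^{-1}\gamma^{-1}(i\infty)$. Since $\gamma^{-1}D=D\delta^{-1}$, this equals $\delta^{-1}(D^{-1}(i\infty))=\delta^{-1}(i\infty)$, because $D$ fixes $i\infty$.

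\textbf{Main obstacle.} The main obstacle is bookkeeping rather than analysis. First, one must check that $\Gamma_\Theta$ is the correct group, with integrality of $D\delta D^{-1}$ forcing $c$ even. Second, one must confirm that the Goswami--Osburn normalization (period $M=8$, exponent $r^2/2M$, group $\Gamma_1(16)$) matches \eqref{qm} exactly. I would verify this first against their theorem's precise statement, adjusting the level if their group is stated with a different normalization.
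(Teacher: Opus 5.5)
Your proposal is correct and follows essentially the same route as the paper: identify $\Theta(\tau)=\vartheta_\psi(2\tau)$, conjugate $\delta\in\Gamma_\Theta$ to $\gamma=D\delta D^{-1}\in\Gamma_1(16)$ so that $\gamma(2\tau)=2\delta\tau$ and the automorphy factors agree, then evaluate \eqref{qm} at $2\alpha$ and pull back the regularity of $r_{\gamma,\psi}$. Your explicit checks ($c\equiv 0\pmod{32}$, $c\tau+d=(c/2)(2\tau)+d$, and $\tfrac12\gamma^{-1}(i\infty)=\delta^{-1}(i\infty)$) spell out details that the paper's short proof leaves implicit.
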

\begin{proof}
Let all notations be as in Theorem \ref{thm:quantum-modularity}. For \(\delta\in\Gamma_{\Theta}\), since \(\Theta(\tau)=\vartheta_{\psi}(2\tau)\) and $\gamma(2\tau)=2\delta\tau$, the slash actions are compatible:
\begin{equation}\label{compatible}
\left(\vartheta_{\psi}\big|_{\frac12,\nu_{\psi}}\gamma\right)(2\tau)
=
\left(\Theta\big|_{\frac12,\nu_{\Theta}}\delta\right)(\tau).    
\end{equation}
Thus \eqref{qm} and \eqref{compatible} gives
\[
\Theta(\alpha)
-
\left(\Theta\big|_{\frac12,\nu_{\Theta}}\delta\right)(\alpha)
=
r_{\gamma,\psi}(2\alpha)=:R_{\delta,\psi}(\alpha).
\]
Since \(r_{\gamma,\psi}\) is real-analytic away from \(\gamma^{-1}(i\infty)\),
\(R_{\delta,\psi}\) is real-analytic away from \(\delta^{-1}(i\infty)\).  This
proves the result.
\end{proof}
\section{Declaration of interests}
The authors declare that they have no known competing financial interests or personal relationships that could have appeared to influence the work reported in this paper.
\end{document}